\documentclass[journal]{IEEEtran}
\usepackage[T1]{fontenc}

\usepackage{amsmath,amssymb,amsthm,bm,mathtools,mathrsfs}
\usepackage{booktabs,array,graphicx,cite}
\usepackage[hidelinks]{hyperref}
\hypersetup{pdftitle={Global Output Lifting and Surjectivity in Invertible Nonlinear MIMO Systems},pdfauthor={Zhaobo Liu},pdfsubject={Nonlinear inversion and global output geometry},pdfkeywords={MIMO, surjectivity, detectability, nonlinear inversion}}
\newtheorem{theorem}{Theorem}
\newtheorem{lemma}{Lemma}
\newtheorem{proposition}{Proposition}
\newtheorem{corollary}{Corollary}
\newtheorem{assumption}{Assumption}
\theoremstyle{definition}
\newtheorem{definition}{Definition}
\newtheorem{remark}{Remark}
\newcommand{\R}{\mathbb R}
\newcommand{\col}{\operatorname{col}}

\newcommand{\id}{\operatorname{id}}
\newcommand{\KL}{\mathcal{KL}}
\newcommand{\Kinf}{\mathcal K_\infty}

\newcommand{\BB}{\mathcal B}
\newcommand{\DD}{\mathcal D}

\newcommand{\jet}[2]{j^{#1}#2}
\DeclareMathOperator{\diag}{diag}
\DeclareMathOperator{\im}{im}
\allowdisplaybreaks[1]

\begin{document}
\bstctlcite{BSTcontrol:ShowAuthors}
\title{Global Output Lifting and Surjectivity in Invertible Nonlinear MIMO Systems}
\author{Zhaobo~Liu%
\thanks{This work was supported in part by the National Natural Science Foundation of China under Grant 12401585, the Guangdong Basic and Applied Basic Research Foundation under Grant 2024A1515011542, and the General Program of Shenzhen Natural Science Foundation under Grant JCYJ20250604181037012.}
\thanks{Zhaobo Liu is with the Institute for Advanced Study, Shenzhen University, 3883 Baishi Road, Nanshan District, Shenzhen 518060, China (e-mail: liuzhaobo@szu.edu.cn).}}
\maketitle

\begin{abstract}
Global inversion of nonlinear multivariable systems raises the question of whether every target value of the selected output derivatives can be realized by a plant state. We prove that, under globally regular modified inversion with fixed row selection and finite termination at full input rank, every fixed homogeneous linear output assignment yields a surjective output-derivative map with a smooth global section. The result requires a state bound on each finite interval in terms of the initial state and finitely many output derivatives, uniformly over inputs. This condition is strictly weaker than weak uniform zero-state detectability, making the separate surjectivity requirement in a global inversion approach to stabilization a consequence of detectability under the stated inversion hypotheses. The section selects a realizing state smoothly for every derivative target, including in the presence of internal dynamics, and is constructed through complete lifting of compatible smooth references. For known plants, the same lifting method yields a smooth dynamic controller using full state feedback and current reference derivatives, with initialization from the plant state. From any initial state, it achieves exact tracking of any smooth desired output after any prescribed positive time.
\end{abstract}
\begin{IEEEkeywords}
Nonlinear systems, multivariable systems, inversion, detectability, global stabilization, surjectivity.
\end{IEEEkeywords}

\section{Introduction}
\IEEEPARstart{S}{tabilization}, trajectory tracking, and output regulation are central tasks of nonlinear control. For invertible multivariable systems, inversion turns prescribed output dynamics into feedback laws. This approach supports tracking and response synthesis \cite{HirschornTracking1981,IR1984,RAO2006}, stabilization \cite{Liberzon2004}, and robust dynamic feedback linearization and output regulation \cite{Wu2020,Wu2021,IsidoriWu2023,IsidoriJSSC2025}. Extending these designs across the state space requires understanding both the output behavior that the plant can realize and the internal motion that accompanies it.

Classical linear \cite{Silverman1969} and nonlinear \cite{HirschornSIAM1979,Hirschorn1979,Singh1981,Singh1982} inversion algorithms differentiate output equations to expose independent input directions, with rank invariants and intrinsic regularity describing the supporting conditions \cite{DBGM1989,DBG1990}. Under suitable regularity and termination conditions, they locally realize a compatible reference. For a homogeneous output assignment, realizing arbitrary derivative targets also requires the associated coordinate map to be onto the assigned derivative space. This property is a separate assumption in a global inversion theorem for stabilization \cite[Th. 2]{Liberzon2004}. Finding conditions that imply it is the surjectivity question highlighted in \cite[slide 119]{Isidori2017}. The same range requirement appears in a recent overview \cite[Sec. 6.1]{IsidoriJSSC2025} and recent stabilization and regulation constructions \cite{IsidoriNOLCOS2025,IsidoriCDC2025,Isidori2026}.

One approach to establishing surjectivity is to recover the entire state from the assigned derivative coordinates \cite{Wang2015,Wang2016}. A global diffeomorphism result is available for a class of systems with a global normal form, trivial zero dynamics, and triangular dependence of its multipliers \cite{Wang2017}. The resulting coordinate map coincides with the map used in the stabilization theorem \cite[Remark 5]{Wang2017}. The triangular structure permits successive recovery of the state, yielding a unique state for every coordinate target. With internal dynamics, the assigned derivatives may no longer determine the entire state, although every coordinate target may still have a preimage. The question is how to guarantee surjectivity when full state recovery from the assigned coordinates is unavailable.

We use output tracking to find a state realizing each derivative target. The construction selects a reference with prescribed terminal derivatives and uses inversion to generate a tracking trajectory whose endpoint realizes the target. The key difficulty is to ensure that this trajectory reaches the terminal time. Even when inversion is globally regular and the output and its required derivatives remain bounded, the internal state may escape in finite time. Stability and detectability estimates \cite{SW1997,LMS2002} and unboundedness observability \cite{AS1999} provide tools for controlling this state growth. We use a bound that controls the state on each finite interval in terms of the initial state and finitely many output derivatives, uniformly over inputs, ensuring that the inverse trajectory continues to the terminal time.

We prove that, under globally regular modified inversion with fixed row selection and finite termination at full input rank, the state bound above ensures surjectivity of the output-derivative map associated with every fixed homogeneous linear output assignment. Moreover, the map admits a smooth global section, so a realizing state can be selected smoothly for every derivative target even when internal dynamics remain. The state bound is strictly weaker than weak uniform zero-state detectability. Thus, under the stated inversion conditions, detectability implies the surjectivity property separately assumed in the global inversion approach to stabilization. This answers the surjectivity question in \cite{Isidori2017} within that framework.

Complete lifting also turns the existence analysis into a tracking design. For known plants, we construct a smooth dynamic controller that, from any initial state, exactly tracks any smooth desired output after any prescribed positive time. The controller generates a reference compatible with the initial state and smoothly joins it to the desired output, thereby accommodating desired outputs that do not satisfy the plant's initial compatibility conditions. It uses full state feedback and current reference derivatives and is initialized from the plant state. Under weak uniform zero-state detectability, bounded initial states and uniformly bounded desired outputs and required derivatives further ensure uniform state and input bounds for all time. These guarantees accommodate internal dynamics and do not require the original outputs to have a uniform vector relative degree.

Section~\ref{sec:setting} states the hypotheses, and Section~\ref{sec:main} presents the main results. Sections~\ref{sec:liftproof} and~\ref{sec:geometry} prove lifting and the geometric conclusions. Section~\ref{sec:consequences} treats feedback applications, and Section~\ref{sec:examples} presents a criterion for the state bound, examples, and numerical verification.

\noindent\textit{Notation.} The operators $\col$ and $\diag$ denote vertical stacking of vectors or matrices and block diagonal assembly, respectively. We write $Dg$ for the Jacobian of a smooth map $g$ and $D_zg$ for its partial Jacobian with respect to $z$. The symbols $I_d$ and $\id_{\R^d}$ denote the identity matrix and identity map, respectively. Superscripts $\top$ and $T$ denote transpose. We use the Euclidean norm $|\cdot|$ and its induced matrix norm $\|\cdot\|$. For a signal $v$, write $\|v\|_{[0,t]}\triangleq \sup_{0\le s\le t}|v(s)|$. For a smooth signal $y$, its output jet $\jet{N}{y}\triangleq \col(y,\dot y,\ldots,y^{(N)})$ collects derivatives of orders zero through $N$. For smooth maps of compatible dimensions, $L_fh\triangleq Dh\,f$ and $L_Gh\triangleq Dh\,G$ denote differentiation along a vector field and along the columns of a matrix field, respectively. A continuous function $\alpha:[0,\infty)\to[0,\infty)$ belongs to $\mathcal K$ if it is strictly increasing and $\alpha(0)=0$; it belongs to $\Kinf$ if it is also unbounded. A continuous function $\beta:[0,\infty)^2\to[0,\infty)$ belongs to $\KL$ if $\beta(\cdot,t)\in\mathcal K$ for each $t$ and $\beta(r,\cdot)$ is nonincreasing and tends to zero for each fixed $r$.

\section{System Description and Assumptions}\label{sec:setting}
We impose inversion conditions and a state bound to realize output references compatible with the initial state, with compatibility defined in Section~\ref{sec:main}. The inversion conditions determine the input from the state and output derivatives. The state bound then ensures that the resulting trajectory continues throughout any prescribed finite interval.

\subsection{System and inversion notation}
Consider the input-affine system with equal numbers of inputs and outputs
\begin{equation}\label{eq:plant}
 \dot x=f(x)+G(x)u,\qquad y=h(x),
\end{equation}
where $x\in\R^n$, $u,y\in\R^m$, and all maps are $C^\infty$ on their Euclidean domains. Solutions are understood on their maximal intervals unless stated otherwise. Forward completeness is not assumed. We work with smooth inputs. Any input defined on a neighborhood of a compact time interval may be smoothly extended beyond that interval.

Inversion determines the input from the state and prescribed output derivatives. The first derivative satisfies $\dot y=L_fh(x)+L_Gh(x)u$, which determines $u$ whenever $L_Gh(x)$ is nonsingular. Otherwise, higher derivatives may provide additional independent input equations. For example, consider
\[
 \dot x_1=u_1,\quad \dot x_2=x_3+u_1,\quad
 \dot x_3=u_2,\qquad y=(x_1,x_2)^\top.
\]
The first derivatives give $\dot y_1=u_1$ and $\dot y_2=x_3+u_1$. Eliminating $u_1$ yields $\dot y_2-\dot y_1=x_3$, and differentiation then gives $u_2=\ddot y_2-\ddot y_1$. Thus one input equation is obtained at the first derivative order and a second at the next order.

The nonlinear inversion methods of Hirschorn \cite{Hirschorn1979} and Singh \cite{Singh1981} provide the basis for this procedure. We use a modified inversion algorithm with fixed row selection that repeats the elimination and differentiation above. At each stage it selects equations whose input coefficient rows add independent directions, retains all previously selected equations, and uses the full retained set for elimination.

Let $\rho_k$ be the cumulative number of independent input equations after stage $k$. Under the global conditions below, these numbers are constant and satisfy $0=\rho_0\le\rho_1\le\cdots\le\rho_K=m$ for a finite $K$. A fixed permutation groups the outputs whose equations are first selected at stage $k$ into $y_{[k]}\in\R^{m_k}$, where $m_k\triangleq \rho_k-\rho_{k-1}$. A group may contain several outputs, or be empty when the rank does not increase at that stage. Unless specified otherwise, $\col_k$ and $\diag_k$ use increasing $k=1,\ldots,K$, omitting empty groups.

For the output-derivative map studied below, group $y_{[k]}$ contributes its derivatives of orders zero through $k-1$. The total number of scalar coordinates is therefore
\begin{equation*}
 d\triangleq \sum_{k=1}^K k m_k.
\end{equation*}
For the two-input system above, $K=2$, $\rho_1=1$, $\rho_2=2$, and $y_{[1]}=y_1$, $y_{[2]}=y_2$. The lower output derivatives forming these coordinates are $y_1,y_2,\dot y_2$, so $d=3$.

To continue elimination and differentiation, the algorithm also uses derivatives of each selected group from its selection order onward. For a smooth output signal $y$, define the derivative extraction operators
\begin{equation*}
\begin{aligned}
 \DD_\ell(y)&\triangleq\col\{y_{[j]}^{(r)}:1\le j\le r\le\ell\},\\
 \DD_k^-(y)&\triangleq\col\{y_{[j]}^{(r)}:1\le j<k,\ j\le r\le k\}.
\end{aligned}
\end{equation*}
Within each stack, groups are ordered by increasing $j$, and derivatives within a group by increasing $r$. The operator $\DD_k^-$ omits the block $y_{[k]}^{(k)}$ from $\DD_k$.

In the algebraic construction, $\theta_\ell$ denotes a vector of independent real parameters with the same dimension and ordering as $\DD_\ell(y)$. These parameters are called formal derivative variables. The vector $\theta_k^-$ is obtained from $\theta_k$ by omitting the block for group $k$. Both $\theta_0$ and $\theta_1^-$ are empty. Along a plant trajectory, the formulas are evaluated at $\theta_\ell=\DD_\ell(y)$ and $\theta_k^-=\DD_k^-(y)$. For a prescribed reference $r$, they are evaluated at $\DD_\ell(r)$ and $\DD_k^-(r)$. The rank conditions below hold for every value of the independent formal variables.

\subsection{Global inversion conditions}
We impose conditions that yield a smooth formula for the input throughout the state and formal derivative variable domains. The requirement below has the form of \cite[Assumption 1]{Liberzon2004} and is imposed here on the cumulative elimination procedure. Appendix~\ref{app:representation} specifies this procedure and its relation to the printed elimination formula in that reference.

\begin{assumption}\label{ass:G}
At every step $k$, the cumulative matrix consisting of the previously selected coefficient rows and the candidate coefficient rows obtained by differentiation has constant rank $\rho_k$. There is a fixed set of $\rho_k-\rho_{k-1}$ candidate rows that, together with the previously selected rows, forms a linearly independent set for every $x\in\R^n$ and every value of the formal derivative variables $\theta_{k-1}$.
\end{assumption}

We separately impose the full input rank condition used in \cite[Theorem 1]{Liberzon2004}.
\begin{assumption}\label{ass:F}
There is a finite integer $K$ such that $\rho_K=m$.
\end{assumption}

The following lemma records the derivative identities of modified inversion \cite{Singh1981,Liberzon2004}, using all retained rows for elimination as in \cite[Sec. 4]{IsidoriJSSC2025}. The identities at the selected derivative orders determine the input. Those at lower orders specify initial compatibility with a reference.

\begin{lemma}\label{lem:representation}
Under Assumptions~\ref{ass:G} and~\ref{ass:F}, there are globally smooth functions $a_k,Q_{k,\ell}$ taking values in $\R^{m_k}$ and $B_k$ taking values in $\R^{m_k\times m}$ such that, along every trajectory driven by a smooth input,
\begin{align}
 y_{[k]}^{(k)}
   &=a_k(x,\theta_k^-)+B_k(x,\theta_{k-1})u,
       &&1\le k\le K,\label{eq:terminal}\\
 y_{[k]}^{(\ell)}
   &=Q_{k,\ell}(x,\theta_\ell),
       &&0\le\ell<k.\label{eq:nonterminal}
\end{align}
Here $B_k$ is the coefficient matrix of $u$, and $a_k$ is the remaining term. At order zero, $Q_{k,0}=h_{[k]}$, where $h_{[k]}$ is the corresponding block of $h$. The matrix
\begin{equation*}
 \BB(x,\theta_{K-1})\triangleq \col_{k=1}^K B_k(x,\theta_{k-1})
\end{equation*}
is an $m\times m$ matrix that is nonsingular for every $x$ and every value of $\theta_{K-1}$. Each block $B_k$ uses the corresponding components $\theta_{k-1}$ of this same vector.
\end{lemma}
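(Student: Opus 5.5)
We construct the functions by induction on the stage $k$, following the cumulative elimination procedure of Appendix~\ref{app:representation}. The inductive hypothesis after stage $k-1$ has three parts. First, for every $j\le k-1$ there are globally smooth $a_j(x,\theta_j^-)$ and $B_j(x,\theta_{j-1})$ such that \eqref{eq:terminal} holds along trajectories. Second, the stacked matrix $\col_{j<k}B_j$ has full row rank $\rho_{k-1}$ everywhere. Third, for every output group not yet selected, there is a smooth expression for its derivative of order $k-1$ in the form $y_{[i]}^{(k-1)}=Q_{i,k-1}(x,\theta_{k-1})$, in which $u$ does not appear. The base case $k=1$ takes $Q_{i,0}=h_{[i]}$; here $\theta_0$ is empty and no rows have been selected.

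For the inductive step, we differentiate each candidate identity $y_{[i]}^{(k-1)}=Q_{i,k-1}(x,\theta_{k-1})$ along a trajectory, using the chain rule. Differentiation with respect to $x$ produces $L_fQ+L_GQ\,u$. Differentiation with respect to $\theta_{k-1}$ produces terms whose factors are the next derivatives of selected groups. These factors are exactly the entries of $\theta_k$ not in $\theta_{k-1}$, namely $y_{[j]}^{(k)}$ for $j<k$. This gives candidate rows whose coefficient of $u$ is $L_GQ_{i,k-1}(x,\theta_{k-1})$.

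By Assumption~\ref{ass:G}, the fixed selection of $m_k$ candidate rows together with the retained rows $\col_{j<k}B_j$ is linearly independent for all $(x,\theta_{k-1})$. The fixed permutation defining $y_{[k]}$ is chosen so that these selected rows correspond to $y_{[k]}$. We set $B_k\triangleq L_GQ_{[k],k-1}$, and we let $a_k$ be the remaining drift part. The drift part depends on $x$, on $\theta_{k-1}$, and on the $\theta$-derivative terms $y_{[j]}^{(k)}$ with $j<k$. It does not involve $y_{[k]}^{(k)}$, so it is a smooth function of $(x,\theta_k^-)$.

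For the unselected candidates, the constant rank $\rho_k$ means each of their coefficient rows lies in the span of the $\rho_k$ independent retained rows. Because those rows have full row rank, the coefficients of this combination are smooth functions. One can write them explicitly as $c=L_GQ\,\mathcal B_k^\top(\mathcal B_k\mathcal B_k^\top)^{-1}$ with $\mathcal B_k\triangleq\col_{j\le k}B_j$, which is smooth because $\mathcal B_k\mathcal B_k^\top$ is invertible everywhere.

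The stacked retained equations read $\col_{j\le k} y_{[j]}^{(j)}=\col_j a_j+\mathcal B_k u$. Subtracting $c$ times these equations from each unselected identity eliminates $u$. This yields $y_{[i]}^{(k)}=Q_{i,k}(x,\theta_k)$, which is smooth and free of $u$. It is also consistent with \eqref{eq:nonterminal} for groups first selected at later stages, where $\ell=k<$ their index. This closes the induction.

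At stage $K$, Assumption~\ref{ass:F} gives $\rho_K=m$. Hence $\BB=\mathcal B_K$ is $m\times m$ with independent rows everywhere, which means it is nonsingular. Each $B_k$ uses the subvector $\theta_{k-1}$ of $\theta_{K-1}$ by construction.

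The main obstacle is global smoothness of the elimination coefficients. A naive elimination, for example choosing pivot columns or using local Cramer's rule, would be smooth only locally. The pseudo-inverse formula above resolves this, because it needs only the everywhere full row rank of the retained rows, which Assumption~\ref{ass:G} supplies globally.

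A second point needs care. The entries of $\theta_k$ must be exactly the derivatives that appear, with no derivatives of unselected groups entering. This holds because unselected groups are always replaced by their $Q$-expressions before the next differentiation. As a result, only the selected groups' derivatives ever become formal variables.
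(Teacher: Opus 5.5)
Your proposal is correct and follows essentially the same route as the paper's proof in Appendix~\ref{app:representation}. Both arguments use induction over stages in which differentiation shifts the formal variables, fixed row selection from Assumption~\ref{ass:G}, globally smooth elimination coefficients $P_k=\widetilde B_k(\BB^{[k]})^\top(\BB^{[k]}(\BB^{[k]})^\top)^{-1}$ from the everywhere positive definite Gram matrix, substitution of the retained equations to obtain a $u$-free $\overline Q_k(x,\theta_k)$, and nonsingularity of $\BB=\BB^{[K]}$ at stage $K$; the only points the paper spells out that you leave implicit are the degenerate stages $\rho_k=0$ (no elimination, $\overline Q_k=\widehat a_k$) and $m_k=0$ with $\rho_k>0$ (elimination with the retained stack but no new group), which your construction handles trivially.
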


The proof is given in Appendix~\ref{app:representation}. Substituting the reference derivatives into \eqref{eq:terminal} and inverting $\BB$ gives a smooth input formula. Initial compatibility is defined from \eqref{eq:nonterminal} in Section~\ref{sec:main}.

\subsection{A uniform state bound}
For continuation of reference lifts, we retain the bound on the state over finite time intervals implied by weak uniform zero-state detectability \cite{LMS2002,Liberzon2004}. The following assumption bounds the state on each finite horizon in terms of the initial state and output derivatives, uniformly over inputs.
\begin{assumption}\label{ass:B}
There is a finite integer $N\ge0$ such that, for every time horizon $\tau>0$, initial state bound $R\ge0$, and output jet bound $M\ge0$, there is a finite number $\Omega(\tau,R,M)$ with the following property. Every trajectory of \eqref{eq:plant} driven by a smooth input satisfies, on its maximal existence interval,
\begin{equation*}
 |x(t)|\le\Omega(\tau,R,M)
\end{equation*}
whenever $0\le t\le\tau$, $|x(0)|\le R$, and $\|\jet{N}{y}\|_{[0,t]}\le M$. The bound is uniform over the inputs.
\end{assumption}

Along a reference lift, bounded reference derivatives and the smooth inversion equation allow this state bound to ensure continuation on each prescribed finite interval. The bound may grow with the time horizon. Weak uniform zero-state detectability of finite order, related to the output-to-state stability estimates in \cite{SW1997}, requires a finite integer $N\ge0$ and fixed functions $\beta\in\KL$, $\gamma\in\Kinf$ (enlarging the gain to class $\Kinf$ if necessary) such that, for every initial state and every smooth input, the corresponding trajectory satisfies
\begin{equation}\label{eq:WUD}
 |x(t)|\le\beta(|x(0)|,t)
       +\gamma\bigl(\|\jet{N}{y}\|_{[0,t]}\bigr),
\end{equation}
for every nonnegative time in its maximal existence interval. This estimate implies Assumption~\ref{ass:B} with
\begin{equation}\label{eq:omega}
 \Omega(\tau,R,M)\triangleq \beta(R,0)+\gamma(M).
\end{equation}

The converse does not hold. Assumption~\ref{ass:B} is strictly weaker than weak uniform zero-state detectability, as shown by the example in Section~\ref{sec:state-verification}. Under the same inversion conditions, Corollary~\ref{cor:prescribed-tracking} uses \eqref{eq:WUD} to bound both state and input for all time when the desired output and its required derivatives are uniformly bounded over time.

\section{Main Results}\label{sec:main}
\subsection{Compatibility and lifting}
Fix $\tau>0$ and an open interval $I$ containing $[0,\tau]$. A reference is a $C^\infty$ map $r:I\to\R^m$. For $x_0\in\R^n$, a lift of $(x_0,r)$ on $[0,\tau]$ is a smooth pair $(x,u)$ satisfying \eqref{eq:plant}, $x(0)=x_0$, and $h(x(t))=r(t)$.
\begin{definition}\label{def:compatibility}
A pair $(x_0,r)$ is compatible if, for all nonempty groups and all $0\le\ell<k$,
\begin{equation}\label{eq:compatibility}
 r_{[k]}^{(\ell)}(0)
 =Q_{k,\ell}\bigl(x_0,\DD_\ell(r)(0)\bigr).
\end{equation}
\end{definition}
The compatibility conditions match the prescribed initial output derivatives with the identities in Lemma~\ref{lem:representation}.

\begin{theorem}\label{thm:lifting}
Suppose Assumptions~\ref{ass:G}--\ref{ass:B} hold. Every compatible pair $(x_0,r)$ has a unique lift on $[0,\tau]$. It is generated by the globally defined feedback
\begin{equation}\label{eq:Ur}
\begin{aligned}
 U_r(t,x)\triangleq {}&\BB\bigl(x,\DD_{K-1}(r)(t)\bigr)^{-1}\\
 &{}\times\col_k\!\left[
 r_{[k]}^{(k)}(t)-a_k\bigl(x,\DD_k^-(r)(t)\bigr)
 \right].
\end{aligned}
\end{equation}
More precisely, the solution of
\begin{equation}\label{eq:liftODE}
 \dot x=f(x)+G(x)U_r(t,x),\qquad x(0)=x_0
\end{equation}
exists on $[0,\tau]$ and satisfies $h(x(t))=r(t)$. For every jointly smooth finite-dimensional family of compatible initial states and references, its endpoint depends smoothly on the parameters.
\end{theorem}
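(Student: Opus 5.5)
The proof has three parts: the closed-loop system \eqref{eq:liftODE} tracks the reference, its solution reaches $\tau$, and the lift is unique. Smooth dependence of the endpoint then follows from standard ODE theory.

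\emph{Local existence and tracking.} The feedback $U_r$ is smooth in $(t,x)$ on $I\times\R^n$, because $\BB$ is globally nonsingular by Lemma~\ref{lem:representation} and $r$ is $C^\infty$. Hence \eqref{eq:liftODE} has a unique maximal smooth solution on some $[0,t^*)$. Let $e\triangleq h(x)-r$ on this interval. I will show by induction on $\ell$ that $y_{[k]}^{(\ell)}\equiv r_{[k]}^{(\ell)}$ for all $k$ and all $\ell\le k$, so that $e\equiv0$.

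Substituting $u=U_r$ into \eqref{eq:terminal} gives
\[
y_{[k]}^{(k)}-r_{[k]}^{(k)}=a_k\bigl(x,\DD_k^-(y)\bigr)-a_k\bigl(x,\DD_k^-(r)\bigr)+\bigl[B_k\bigl(x,\DD_{k-1}(y)\bigr)-B_k\bigl(x,\DD_{k-1}(r)\bigr)\bigr]U_r .
\]
The identities \eqref{eq:nonterminal} at orders $\ell<k$, compared with their values at $t=0$ through \eqref{eq:compatibility}, relate $y_{[k]}^{(\ell)}$ to $Q_{k,\ell}(x,\DD_\ell(y))$, while the reference satisfies $r_{[k]}^{(\ell)}(0)=Q_{k,\ell}(x_0,\DD_\ell(r)(0))$.

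The cleanest route is to treat the stacked error $E\triangleq\col\{y_{[k]}^{(\ell)}-r_{[k]}^{(\ell)}:0\le\ell<k\}$ and show it satisfies a linear time-varying ODE $\dot E=A(t)E$ with continuous $A$. Such an ODE follows from the mean value theorem applied to the smooth maps $a_k$, $B_k$, and $Q_{k,\ell}$, which vanish in difference whenever the derivative arguments coincide. Compatibility gives $E(0)=0$, so uniqueness forces $E\equiv0$, and \eqref{eq:terminal} then yields matching top derivatives as well.

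The delicate point is that $\DD_\ell(y)$ contains derivatives of order $\ge$ the selection order of earlier groups. These are the terminal-order derivatives of earlier groups and their further derivatives, so they must be expressed in terms of $E$ and its lower-order parts. I will handle this by ordering the induction by stage $k$. At stage $k$, all groups $j<k$ already satisfy $y_{[j]}\equiv r_{[j]}$ on the interval, hence all their derivatives agree. The remaining unknown is $y_{[k]}$, whose error components $\col_{\ell<k}(y_{[k]}^{(\ell)}-r_{[k]}^{(\ell)})$ form a chain of integrators closed by the terminal equation, with zero initial data from \eqref{eq:compatibility}. This gives $y_{[k]}\equiv r_{[k]}$.

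\emph{Continuation to $\tau$ (main obstacle).} On $[0,t^*)\cap[0,\tau]$ we have $y=r$. Therefore $\|\jet{N}{y}\|_{[0,t]}\le M\triangleq\max_{[0,\tau]}|\jet{N}{r}|<\infty$. Assumption~\ref{ass:B} then gives $|x(t)|\le\Omega(\tau,|x_0|,M)$, and this bound applies because the input $U_r(t,x(t))$ is smooth. If $t^*\le\tau$, the solution would stay in a compact set on $[0,t^*)$, contradicting the blow-up criterion for maximal solutions. Hence the solution exists on $[0,\tau]$, and in fact on a neighborhood of it. This is precisely where the uniformity over inputs in Assumption~\ref{ass:B} is essential, since the input is not known in advance.

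\emph{Uniqueness and smooth dependence.} For any lift $(x,u)$, $h(x)=r$ implies $\DD_\ell(y)=\DD_\ell(r)$. Then \eqref{eq:terminal} stacked over $k$ together with the invertibility of $\BB$ forces $u(t)=U_r(t,x(t))$, so $x$ solves \eqref{eq:liftODE} and uniqueness of ODE solutions gives uniqueness of the lift.

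For a jointly smooth family $(x_0(p),r(p,\cdot))$, the vector field $f+GU_{r(p)}$ is smooth in $(t,x,p)$. The solution exists on $[0,\tau]$ for each $p$ by the continuation step, so smooth dependence on initial data and parameters gives smoothness of $x(\tau;p)$.
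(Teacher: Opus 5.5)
Your proposal is correct and follows essentially the same route as the paper. Local tracking is by stage-by-stage induction over the selected groups, using \eqref{eq:terminal} and compatibility. Continuation combines the input-uniform bound of Assumption~\ref{ass:B} with the blow-up criterion. Uniqueness comes from recovering the input via the stacked terminal identities, and parameter dependence from the smooth flow theorem. The linear time-varying error ODE you sketch first is unnecessary and, as you note, does not close directly; the stage-ordered induction is the right argument.

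The one step the paper treats more carefully is applying Assumption~\ref{ass:B} to the closed-loop input $U_r(t,x(t))$, which exists only on the maximal interval and may blow up at its end. The paper fixes $t_0$ below the escape time and uses a cutoff to extend the realized input from $[0,t_0]$ to a globally defined smooth open-loop input. It then uses uniqueness to identify the resulting trajectory with the lift on $[0,t_0]$ before invoking the bound, whereas you apply the bound directly.
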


Uniqueness is for a fixed initial state. Different initial states may generate the same output.

\subsection{Surjectivity and global sections}
Fix homogeneous linear dynamics for each nonempty output group. For group $k$, choose constant matrices $L_{k,\ell}\in\R^{m_k\times m_k}$, $0\le\ell<k$, and prescribe
\begin{equation}\label{eq:assignment}
 y_{[k]}^{(k)}=\sum_{\ell=0}^{k-1}L_{k,\ell}y_{[k]}^{(\ell)}.
\end{equation}
Let $A_k\in\R^{km_k\times km_k}$ be the associated block companion matrix and $C_k\in\R^{m_k\times km_k}$ the selector of its first block. In $m_k\times m_k$ blocks, for $1\le i,j\le k$,
\[
 (A_k)_{ij}\triangleq \begin{cases}
 I_{m_k},&i<k,\ j=i+1,\\
 L_{k,j-1},&i=k,\\
 0,&\text{otherwise}.
 \end{cases}
\]
The selector is $C_k\triangleq \begin{pmatrix}I_{m_k}&0&\cdots&0\end{pmatrix}$. Set
\begin{equation}\label{eq:AC}
 A\triangleq \diag_k A_k,\qquad C\triangleq \diag_k C_k.
\end{equation}
Hurwitz stability of the companion matrices is not required for the geometric conclusions.

Under Assumptions~\ref{ass:G} and~\ref{ass:F}, Lemma~\ref{lem:rigidity} constructs a global smooth feedback $\alpha:\R^n\to\R^m$ realizing \eqref{eq:assignment}. With $F\triangleq f+G\alpha$, use $L_F^0h\triangleq h$ and $L_F^{\ell+1}h\triangleq D(L_F^\ell h)F$. Its associated output-derivative map $T:\R^n\to\R^d$ is
\begin{equation}\label{eq:Tdef}
 T(x)\triangleq \col_{k=1}^K T_{[k]}(x),\qquad
 T_{[k]}\triangleq \col_{0\le\ell<k}L_F^\ell h_{[k]}.
\end{equation}
Lemma~\ref{lem:assignment-unique} shows that every pair of globally smooth maps realizing the same assignment coincides with $(\alpha,T)$.

\begin{theorem}\label{thm:section}
Suppose Assumptions~\ref{ass:G} and~\ref{ass:F} hold. For every choice of
\eqref{eq:assignment}, the associated map $T$ satisfies
$\operatorname{rank}DT(x)=d$ for all $x\in\R^n$. In particular,
$d\le n$.

If Assumption~\ref{ass:B} also holds, then for every $\tau>0$
there is a smooth global section
\begin{equation*}
 R_\tau:\R^d\longrightarrow\R^n,
 \qquad T\circ R_\tau=\id_{\R^d}.
\end{equation*}
The map $T$ is then a surjective submersion, and $R_\tau$ is a
proper smooth embedding. If $d=n$, then $T$ is a global
diffeomorphism. The section constructed below satisfies
$R_\tau(0)=0$ whenever $T(0)=0$ and $F(0)=0$.

If \eqref{eq:WUD} holds, then $T(0)=F(0)=0$, the constructed section satisfies $R_\tau(0)=0$, and
\begin{equation}\label{eq:sectionbound}
 |R_\tau(b)|\le\gamma(c_\tau|b|)
\end{equation}
for all $b\in\R^d$, where the finite constant $c_\tau>0$ depends on the assignment, $N$, and the chosen reference family.
\end{theorem}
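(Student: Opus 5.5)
Our plan is to treat the rank statement and the section construction separately. The section will come from Theorem~\ref{thm:lifting} applied to a smooth family of references built from the target $b$.

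\emph{Rank of $DT$.} Under the feedback $\alpha$ of Lemma~\ref{lem:rigidity}, set $F=f+G\alpha$. Along any trajectory driven by $u=\alpha(x)+v$, the coordinates $T_{[k]}$ collect $y_{[k]}^{(\ell)}$ for $\ell<k$. Differentiating, we get $\dot T=AT+E(x)v$. By Lemma~\ref{lem:representation} the input first enters $y_{[k]}$ at order $k$ through $B_k$, and the last block row of each group is acted on by $B_k$ evaluated along the closed loop. So the $m$ rows $D(L_F^{k-1}h_{[k]})G$, $k=1,\ldots,K$, stacked, equal $\BB$ evaluated at the jet $\theta_{K-1}$ generated by $F$, which is nonsingular. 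The lower rows satisfy $D(L_F^{\ell}h_{[k]})G=0$ for $\ell<k-1$.

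A standard relative-degree argument then gives linear independence of all the differentials $d L_F^\ell h_{[k]}$. Suppose $\sum c_{k,\ell}\,dL_F^\ell h_{[k]}=0$ at some $x$. Pair this relation with $\mathrm{ad}_F^{j}G$ for $j=0,1,\ldots,K-1$, working from the top orders down. This produces a block-triangular system whose diagonal blocks are pieces of $\BB$, and the system forces all $c_{k,\ell}=0$. This is the same mechanism as in the vector relative degree case, applied groupwise to the nonuniform orders $k$. Hence $\operatorname{rank}DT=d$ everywhere, and so $d\le n$.

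\emph{Section.} Fix $\tau>0$ and $b\in\R^d$. Choose $x_*$ as follows: if $T(0)=0$ and $F(0)=0$, take $x_*=0$; otherwise take any fixed point. Starting from $x_*$, build a reference $r_b$ in two pieces. The first piece is the output of the closed loop $\dot x=F(x)$ from $x_*$ near $t=0$. The second piece near $t=\tau$ is the solution of the linear dynamics \eqref{eq:assignment} with terminal value $C e^{A(t-\tau)}b$, so that the $T$-coordinates of the reference at $\tau$ equal $b$. Join the two pieces with a fixed smooth partition of unity. The first piece makes $(x_*,r_b)$ compatible, because its initial jet coincides with the jet of a genuine trajectory.

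The main obstacle is that the first piece, the free closed-loop output, may only exist on a short interval, since $F$ need not be complete. We resolve this by using only a fixed short window $[0,\epsilon]$ on which the trajectory from the fixed point $x_*$ exists. On that window the reference is independent of $b$, so compatibility is automatic. The family $b\mapsto r_b$ is then jointly smooth, indeed affine in $b$ after the window.

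Theorem~\ref{thm:lifting} now gives a lift of $(x_*,r_b)$ on $[0,\tau]$. We define $R_\tau(b)=x(\tau)$, which is smooth in $b$ by the last clause of that theorem. On $[\tau-\epsilon,\tau]$ the reference obeys \eqref{eq:assignment}. On that interval the lifting input coincides with $\alpha$, by the uniqueness in Lemma~\ref{lem:assignment-unique} and the structure of \eqref{eq:Ur}. Therefore $T(x(\tau))=\DD$-jet of $r_b$ at $\tau$, which equals $b$, and $T\circ R_\tau=\id$.

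\emph{Remaining conclusions.} Having a section makes $T$ surjective, and together with the rank statement $T$ is a surjective submersion. $R_\tau$ is an injective immersion because $T\circ R_\tau=\id$. It is proper because $R_\tau^{-1}(\text{compact})\subset T(\text{compact})$, so it is a proper smooth embedding. If $d=n$, then $T$ is a local diffeomorphism with a global section. The image of $R_\tau$ is then open, since $R_\tau$ is a local diffeomorphism by dimension count, and closed, since it is proper. Hence $R_\tau$ is onto, so $T$ is bijective and is a diffeomorphism.

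If $T(0)=F(0)=0$ and $b=0$, the reference $r_0$ is identically $0=h(0)$ and the equilibrium $x\equiv0$ lifts it. By uniqueness, $R_\tau(0)=0$.

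Under \eqref{eq:WUD}, $T(0)=0$ and $F(0)=0$ follow from applying the estimate to the zero-output trajectory, so $x_*=0$. The reference is then linear in $b$, which gives $\|\jet{N}{r_b}\|_{[0,\tau]}\le c_\tau|b|$. Applying \eqref{eq:WUD} with $x(0)=0$ yields $|R_\tau(b)|\le\gamma(c_\tau|b|)$.
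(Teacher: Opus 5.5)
\textbf{Gap in the rank argument.} Your section construction and the geometric consequences follow the paper's route, with two minor points. First, the worry about the free closed-loop output is unnecessary: by \eqref{eq:semiconj} that output is $Ce^{At}T(x_*)$, which is defined for all $t$, as in \eqref{eq:rb}. Second, the terminal identity $T(x(\tau))=b$ should come from the last clause of Lemma~\ref{lem:rigidity}. Lemma~\ref{lem:assignment-unique} concerns global realizing maps. You also cannot read ``input $=\alpha$'' off \eqref{eq:Ur} without already knowing that the reference jets equal the state's homogeneous jets, which is the identity you want.

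The genuine gap is the proof that $\operatorname{rank}DT=d$. Your key structural claim is false under modified inversion: $D(L_F^{\ell}h_{[k]})G=0$ need not hold for $\ell<k-1$. Take the paper's own example $\dot x_1=u_1$, $\dot x_2=x_3+u_1$, $\dot x_3=u_2$, $y=(x_1,x_2)^\top$, which satisfies Assumptions~\ref{ass:G} and~\ref{ass:F} with $K=2$. Here $h_{[2]}=x_2$ and $Dh_{[2]}\,G=(1,0)\neq0$, although $\ell=0<k-1=1$. Your second claim also fails. With $\alpha_1=L_{1,0}x_1$ one gets $L_Fh_{[2]}=x_3+L_{1,0}x_1$, so $D(L_Fh_{[2]})G=(L_{1,0},1)$, whereas $B_2=(0,1)$. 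The stacked top rows are related to $\BB$ only through a block unitriangular factor.

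The deeper point is that modified inversion lets the input enter lower derivatives of later groups, in the span of earlier selected rows. These terms are eliminated through coefficients $P_k(x,\theta)$ that depend on the state and on formal derivatives. The paper stresses that the original outputs need not have a vector relative degree. Consequently the pairings $\langle dL_F^\ell h_{[k]},\operatorname{ad}_F^jG\rangle$ with $\ell+j<k-1$ need not vanish. The block-triangular system you invoke does not materialize, and pushing the elimination through Lie brackets generates derivatives of $P_k$ that your sketch does not control. This matters because the rank claim is the first assertion of the theorem and must hold without Assumption~\ref{ass:B}. The submersion conclusion also needs full rank at every $x$, and your global section only gives it on $\im R_\tau$.

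\textbf{How to close the gap.} Use a local right inverse built by lifting, as the paper does.
\begin{itemize}
\item Fix $x$ and choose $\tau>0$ small enough that $x_-\triangleq\varphi_{-\tau}(x)$ exists for the flow of $F$.
\item Lift $r_{x_-,b}$ from $x_-$. At $b=T(x)$ this lift is the nominal trajectory ending at $x$.
\item By openness of the flow domain, the lifts reach $\tau$ for all $b$ near $T(x)$. Their endpoints give a smooth $E$ with $T\circ E=\id$ near $T(x)$ and $E(T(x))=x$, using the same terminal identification as in your section argument.
\item Differentiating yields $DT(x)\,DE(T(x))=I_d$.
\end{itemize}
This argument uses only Assumptions~\ref{ass:G} and~\ref{ass:F}.
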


The section is constructed by prescribing terminal output derivatives and taking the endpoint of a reference lift. For each target $b\in\R^d$, Section~\ref{sec:geometry} constructs a compatible reference $r_b$ with these terminal derivatives. Theorem~\ref{thm:lifting} gives its state lift $x_b$ from the origin through time $\tau$. The terminal identity established in Section~\ref{sec:geometry} yields
\begin{equation*}
 R_\tau(b)=x_b(\tau),\qquad T(R_\tau(b))=b.
\end{equation*}
The full rank conclusion follows separately from local lifting under Assumptions~\ref{ass:G} and~\ref{ass:F}.

\begin{corollary}\label{cor:Liberzon}
Suppose Assumptions~\ref{ass:G} and~\ref{ass:F} hold and the system is weakly uniformly zero-state detectable. Fix the selected output groups and homogeneous assignment. Let $\alpha_L:\R^n\to\R^m$ and $T_L:\R^n\to\R^d$ be global smooth maps realizing that assignment in the sense of \cite[Sec. 6]{Liberzon2004}, namely, with $F_L\triangleq f+G\alpha_L$, they satisfy $DT_LF_L=AT_L$ and $h=CT_L$. Then $\alpha_L=\alpha$, $T_L=T$, and $T_L$ admits the section in Theorem~\ref{thm:section}. In particular, $T_L$ is onto.
\end{corollary}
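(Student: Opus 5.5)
The corollary is essentially a bookkeeping consequence of three earlier results: the uniqueness statement of Lemma~\ref{lem:assignment-unique}, the fact that \eqref{eq:WUD} implies Assumption~\ref{ass:B}, and the section in Theorem~\ref{thm:section}. The plan has three steps.

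First, I would check that the pair $(\alpha_L,T_L)$ realizes the assignment in exactly the sense used by Lemma~\ref{lem:assignment-unique}. The identity $h=CT_L$ says that the first block of $T_{L,[k]}$ equals $h_{[k]}$. Writing $DT_LF_L=AT_L$ blockwise, with the companion structure of $A_k$, gives two facts. The $(i+1)$-st block of $T_{L,[k]}$ is $L_{F_L}$ of the $i$-th block, so $T_{L,[k]}=\col_{0\le\ell<k}L_{F_L}^\ell h_{[k]}$. The last block then satisfies $L_{F_L}^k h_{[k]}=\sum_{\ell}L_{k,\ell}L_{F_L}^\ell h_{[k]}$. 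Hence along every trajectory of $\dot x=F_L(x)$ the output obeys \eqref{eq:assignment}, and $T_L$ is the associated output-derivative map built from $F_L$ as in \eqref{eq:Tdef}. Lemma~\ref{lem:assignment-unique} then yields $\alpha_L=\alpha$ and $T_L=T$.

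The main point of care is this matching step: the realization notion of \cite[Sec.~6]{Liberzon2004} must be the same as the hypotheses of Lemma~\ref{lem:assignment-unique}. If that lemma is phrased through the feedback producing the assignment along closed-loop trajectories, the blockwise computation above is exactly what bridges the two formulations. If instead it is phrased through the identities $DTF=AT$ and $h=CT$, the match is immediate.

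Second, weak uniform zero-state detectability, i.e.\ \eqref{eq:WUD}, implies Assumption~\ref{ass:B} with $\Omega$ given by \eqref{eq:omega}. So Assumptions~\ref{ass:G}--\ref{ass:B} all hold, and Theorem~\ref{thm:section} supplies, for any $\tau>0$, a smooth section $R_\tau$ with $T\circ R_\tau=\id_{\R^d}$. Since $T_L=T$, we get $T_L\circ R_\tau=\id_{\R^d}$, so $T_L$ admits this section. In particular, every $b\in\R^d$ equals $T_L(R_\tau(b))$, so $T_L$ is onto. The bound \eqref{eq:sectionbound} and $R_\tau(0)=0$ also carry over from the last part of Theorem~\ref{thm:section}.
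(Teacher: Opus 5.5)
Your proposal is correct and matches the paper's argument: Lemma~\ref{lem:assignment-unique} gives $\alpha_L=\alpha$ and $T_L=T$, \eqref{eq:WUD} yields Assumption~\ref{ass:B} through \eqref{eq:omega}, and Theorem~\ref{thm:section} then supplies the section. Your blockwise bridging computation is harmless but unnecessary, because Lemma~\ref{lem:assignment-unique} is stated directly in terms of $D\bar T\,\bar F=A\bar T$ and $h=C\bar T$, which are exactly the corollary's hypotheses.
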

\begin{remark}\label{rem:surjectivity-significance}
Under Assumptions~\ref{ass:G} and~\ref{ass:F}, Corollary~\ref{cor:Liberzon} makes the separate onto requirement in \cite[Th. 2]{Liberzon2004} a consequence of weak uniform zero-state detectability. For the fixed output groups and assignment in Corollary~\ref{cor:Liberzon}, these conditions answer the surjectivity question in \cite[slide 119]{Isidori2017}. Theorem~\ref{thm:section} further establishes surjectivity and a smooth global section under the strictly weaker state bound in Assumption~\ref{ass:B}.
\end{remark}

\subsection{Tracking and coordinate realization}
Complete lifting also gives a controller that achieves exact output tracking after a prescribed time from
any initial state. Choose the homogeneous assignment
so that $A$ in \eqref{eq:AC} is Hurwitz, and let $T$ be its associated map.
Given $\tau>0$, choose a
smooth $\chi:\R\to[0,1]$ that is zero near $0$ and one near and after $\tau$.
Let the desired output $y_d:I\to\R^m$ be smooth on an open interval $I$
containing $[0,\infty)$. Introduce a controller state $\zeta\in\R^d$ and use
\begin{equation}\label{eq:tracking-generator}
\begin{aligned}
 \dot\zeta&=A\zeta,\qquad \zeta(0)=T(x_0),\\
 r(t)&\triangleq (1-\chi(t))C\zeta(t)+\chi(t)y_d(t),\\
 u(t)&=U_r(t,x(t)).
\end{aligned}
\end{equation}
Here $U_r$ is given by \eqref{eq:Ur}. The derivatives of $r$ are computed
from the generator equation and the current derivatives of $y_d$ through
order $K$. Thus the controller uses the full plant state and current
reference data, with initialization from $x_0$.

\begin{corollary}\label{cor:prescribed-tracking}
Under Assumptions~\ref{ass:G}--\ref{ass:B}, the controller
\eqref{eq:tracking-generator} gives a smooth solution on $[0,\infty)$ for
every $x_0\in\R^n$, and
\begin{equation}\label{eq:prescribed-tracking}
 h(x(t))=y_d(t),\qquad t\ge\tau.
\end{equation}
If \eqref{eq:WUD} holds with order $N$, set $\nu\triangleq \max\{K,N\}$. For every $R,M\ge0$,
the plant state, controller state, and input have a common finite bound
over all $t\ge0$, all initial states with $|x_0|\le R$, and all references
with $\sup_{t\ge0}|j^\nu y_d(t)|\le M$. The bound may depend on $R,M,\nu$,
the plant, the assignment, $\tau$, and the chosen $\chi$.
\end{corollary}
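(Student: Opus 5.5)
The plan is to show that the generated reference is compatible with $x_0$, apply Theorem~\ref{thm:lifting} on successive finite horizons, and then bound everything using \eqref{eq:WUD}.

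\emph{Compatibility.} Near $t=0$ we have $\chi\equiv0$, so $r(t)=C\zeta(t)$ with $\zeta(t)=e^{At}T(x_0)$. Because $A$ and $C$ are block diagonal companion forms, the group-$k$ block $r_{[k]}$ satisfies the homogeneous assignment \eqref{eq:assignment}. Its derivatives of orders $0,\dots,k-1$ at $t=0$ are the components of $T_{[k]}(x_0)$, that is, $L_F^\ell h_{[k]}(x_0)$. Consider the closed-loop trajectory of $\dot x=F(x)$ from $x_0$. Its output has exactly these initial derivatives, and it satisfies the same assignment, since $DT\,F=AT$ and $h=CT$ by the construction in Lemma~\ref{lem:rigidity}. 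Hence this output coincides with $C\zeta$ on a neighborhood of $0$. Evaluating \eqref{eq:nonterminal} along that trajectory at $t=0$ then gives exactly \eqref{eq:compatibility} for $(x_0,r)$.

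\emph{Global existence.} The reference $r$ is $C^\infty$ on $I$, since $\chi$, $\zeta$ and $y_d$ are smooth. For each integer $j\ge1$, Theorem~\ref{thm:lifting} with horizon $\tau_j=j$ yields a unique lift on $[0,j]$ generated by $u=U_r(t,x)$. By uniqueness these lifts agree on overlaps, so together they define a smooth solution on $[0,\infty)$ with $h(x(t))=r(t)$. For $t\ge\tau$ we have $\chi=1$, hence $r=y_d$, which proves \eqref{eq:prescribed-tracking}. The controller input $U_r(t,x(t))$ uses only $x(t)$ and $\jet{K}{r}(t)$, and the latter is computed from $\zeta$, $\chi$ and $\jet{K}{y_d}$.

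\emph{Uniform bounds.} Assume \eqref{eq:WUD}. Then $T(0)=0$ by Theorem~\ref{thm:section}, and $T$ is continuous, so $|\zeta(0)|\le \max_{|x|\le R}|T(x)|$. Since $A$ is Hurwitz, $|e^{At}|\le c$ for all $t\ge0$, so $\zeta$ is bounded uniformly in $t$. The derivatives of $C\zeta$ are $CA^i\zeta$, which are bounded as well. By Leibniz' rule, $\jet{\nu}{r}$ is bounded by a constant depending on $R$, $M$, $\nu$, $\chi$ and $A$; the derivatives of $\chi$ are bounded because $\chi$ is constant outside a compact set. Since $y=r$, \eqref{eq:WUD} gives $|x(t)|\le\beta(R,0)+\gamma(\bar M)$ for all $t\ge0$, where $\bar M$ is this bound on $\jet{\nu}{r}$. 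The input is the continuous function $U_r$ evaluated at $x(t)$ and $\jet{K}{r}(t)$, and both arguments lie in a compact set determined by $R$ and $M$. Hence $u$ is uniformly bounded.

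The main obstacle is the compatibility step: one must identify the initial derivative data of $C\zeta$ with the formal quantities $Q_{k,\ell}(x_0,\DD_\ell(r)(0))$. The argument above does this through the realization property of $(\alpha,T)$ in Lemma~\ref{lem:rigidity}, using that the closed-loop output under $\alpha$ and the reference $r$ have the same jets at $0$.
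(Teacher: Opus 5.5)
Your proposal is correct and follows the paper's proof essentially step for step: compatibility comes from agreement of $C\zeta$ with the nominal closed-loop output near $0$, the complete lifts on finite horizons are glued by uniqueness, and Leibniz/Hurwitz bounds on $j^\nu r$ are fed into \eqref{eq:WUD} and the smooth inversion map. The appeal to Theorem~\ref{thm:section} for $T(0)=0$ is unnecessary, since the paper's bound $|\zeta(0)|\le\max_{|x|\le R}|T(x)|$ follows from continuity of $T$ alone.
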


The desired output need not be compatible with $x_0$ because the generator
supplies a compatible initial segment. The same smooth control formula
remains defined through and after $\tau$. Section~\ref{sec:tracking-proof}
proves the result and gives a state estimate after tracking is attained.

For any homogeneous assignment, complete lifting also realizes a coordinate target from the actual initial state and joins the nominal feedback smoothly.
\begin{corollary}\label{cor:coordinate-maneuver}
Suppose Assumptions~\ref{ass:G}--\ref{ass:B} hold. For each homogeneous assignment and each $\tau>0$, there is a smooth map $S_\tau:\R^n\times\R^d\to\R^n$ such that every initial state $x_0$ and target $b$ admit a smooth input whose trajectory exists on $[0,\infty)$ and satisfies
\begin{equation*}
 x(0)=x_0,\quad x(\tau)=S_\tau(x_0,b),\quad
 T(S_\tau(x_0,b))=b.
\end{equation*}
The input agrees with $\alpha(x)$ near $\tau$ and thereafter, and $T(x(t))=e^{A(t-\tau)}b$ for $t\ge\tau$. If \eqref{eq:WUD} holds and $A$ is Hurwitz, then $x(t)\to0$ and $u(t)\to\alpha(0)$, with $\alpha(0)=0$ whenever $f(0)=0$.
\end{corollary}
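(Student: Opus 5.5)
The statement to prove is Corollary~\ref{cor:coordinate-maneuver}. The plan is to build the maneuver by concatenating a reference lift on $[0,\tau]$ with the nominal closed loop $\dot x=F(x)$ after $\tau$, much like the section construction of Theorem~\ref{thm:section}, but starting from the actual state $x_0$ instead of the origin.

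\textbf{Step 1: the reference.} Given $(x_0,b)$, I construct a reference $r_{x_0,b}$ on an open interval containing $[0,\tau]$ with three properties:
\begin{itemize}
\item near $t=0$ it equals $C e^{At}T(x_0)$;
\item near $t=\tau$ it equals $C e^{A(t-\tau)}b$;
\item it depends smoothly, and jointly, on $(t,x_0,b)$.
\end{itemize}
Concretely, I take a cutoff $\chi$ that is zero near $0$ and one near $\tau$, and set
\[
 r_{x_0,b}(t)=(1-\chi(t))\,C e^{At}T(x_0)+\chi(t)\,C e^{A(t-\tau)}b .
\]
Compatibility of $(x_0,r_{x_0,b})$ follows from Lemma~\ref{lem:assignment-unique}/Lemma~\ref{lem:rigidity}. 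Along the nominal flow of $F$, the output group $y_{[k]}$ obeys the assignment \eqref{eq:assignment}. Its jet through order $k-1$ is $T_{[k]}(x)$, and its higher derivatives follow from the companion dynamics, so the output of the $F$-trajectory from $x_0$ is exactly $Ce^{At}T(x_0)$. Evaluating \eqref{eq:nonterminal} along that trajectory at $t=0$ gives \eqref{eq:compatibility}.

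\textbf{Step 2: the lift on $[0,\tau]$.} Theorem~\ref{thm:lifting} applied to this compatible pair gives a unique lift $(x,u)$ on $[0,\tau]$ with $u=U_r(t,x)$. I set $S_\tau(x_0,b)\triangleq x(\tau)$. Smoothness of $S_\tau$ is the parametric endpoint statement of Theorem~\ref{thm:lifting}, applied to the jointly smooth family indexed by $(x_0,b)\in\R^n\times\R^d$.

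\textbf{Step 3: the terminal identity.} This is the main step. Near $\tau$ the reference coincides with the output of the assigned linear dynamics, so I need two facts:
\begin{itemize}
\item $T(x(\tau))=b$;
\item $U_r(t,x(t))=\alpha(x(t))$ near $\tau$.
\end{itemize}
The first comes from the terminal identity of Section~\ref{sec:geometry}. Since $h(x(t))=r(t)$, the jet of $y_{[k]}$ through order $k-1$ at time $\tau$ equals the corresponding block of $b$. Expressing these derivatives through $Q_{k,\ell}$ evaluated at the reference jets then gives the same value as $T_{[k]}(x(\tau))$, because the reference obeys \eqref{eq:assignment}.

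For the second, I compare the $F$-trajectory $\bar x$ starting at $x(\tau')$ for some $\tau'$ close to $\tau$ with the lift. The output of $\bar x$ is $Ce^{A(t-\tau')}T(x(\tau'))$. Once the jets match at $\tau'$, this equals $r$, so $\bar x$ is also a lift of the same compatible data. Uniqueness in Theorem~\ref{thm:lifting} then gives $\bar x=x$, and hence the input equals $\alpha(x)$ there.

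If matching jets at $\tau'$ is awkward, the fallback is to use the uniqueness of the input solving \eqref{eq:terminal}: $\BB$ is invertible and $\alpha$ realizes the same terminal equations. So $U_r(t,x)=\alpha(x)$ whenever $r$ locally equals the assigned output of the state $x$.

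\textbf{Step 4: continuation after $\tau$.} For $t\ge\tau$ I apply $u=\alpha(x)$, which smoothly continues the input already equal to $\alpha$ near $\tau$. Global existence on $[\tau,\infty)$ comes from Assumption~\ref{ass:B}. On $[\tau,\infty)$ we have $T(x(t))=e^{A(t-\tau)}b$ by the defining relation $DT\,F=AT$, so the output jet is bounded on every finite horizon and the state bound prevents finite escape.

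\textbf{Step 5: the asymptotic claim.} If \eqref{eq:WUD} holds and $A$ is Hurwitz, the jet of $y$ through order $N$ is a linear function of $e^{A(t-\tau)}b$ and hence tends to $0$. Applying \eqref{eq:WUD} from a shifted initial time $t_0$, the estimate gives
\[
 \limsup_{t\to\infty}|x(t)|\le\gamma(\varepsilon)
\]
for every $\varepsilon>0$, so $x(t)\to0$. Continuity of $\alpha$ then gives $u(t)\to\alpha(0)$.

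For the last claim, Theorem~\ref{thm:section} yields $T(0)=0$ and $F(0)=0$. Thus $f(0)+G(0)\alpha(0)=0$, and if $f(0)=0$ this reduces to $G(0)\alpha(0)=0$. I would try to conclude $\alpha(0)=0$ from the explicit formula $\alpha=\BB^{-1}\col(\cdots-a_k)$ at $x=0$ with zero jets: $f(0)=0$ forces $a_k(0,0)=0$, and the assignment term vanishes because $T(0)=0$.
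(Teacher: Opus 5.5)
Your proposal is correct and follows essentially the same route as the paper. It uses the reference $r_{x_0,b}$ from \eqref{eq:rb}, Theorem~\ref{thm:lifting} with smooth parameter dependence to define $S_\tau(x_0,b)\triangleq x(\tau)$, Lemma~\ref{lem:rigidity} to get $T(x(t))=e^{A(t-\tau)}b$ and $u=\alpha(x)$ on the terminal window (its last clause is exactly your ``fallback''), and detectability for convergence. The differences are cosmetic: you continue past $\tau$ with $\dot x=F(x)$ and Assumption~\ref{ass:B} rather than setting $\chi\equiv1$ after $\tau$ and gluing lifts on every finite interval, you apply \eqref{eq:WUD} from a shifted initial time rather than citing Lemma~\ref{lem:stability}, and you evaluate \eqref{eq:alpha-recursive} at the origin (using $T(0)=0$ and $a_k(0,0)=0$) rather than invoking the equilibrium clause of Lemma~\ref{lem:rigidity}.
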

The map $T$ gives the assigned derivative coordinates. The section $R_\tau$ is constructed from the origin, whereas $S_\tau$ allows the initial state to vary. The target $b$ specifies the assigned output derivatives at time $\tau$. When $d<n$, the terminal state $x(\tau)$ is selected by the lift and depends on $x_0$. Section~\ref{sec:coordinate-proof} proves this result.

\section{Complete Output Lifting}\label{sec:liftproof}
This section proves Theorem~\ref{thm:lifting}.
\subsection{Local tracking}
Lemma~\ref{lem:representation} makes \eqref{eq:Ur} smooth in $(t,x)$ on a neighborhood of $[0,\tau]\times\R^n$, and hence \eqref{eq:liftODE} has a unique maximal local solution \cite[Ths. 2.2 and 2.13]{Teschl2012}.

Proceed by induction through the nonempty groups. Suppose all groups preceding stage $k$ track their references. All derivatives of these groups entering the selected input equation for group $k$ therefore agree with the corresponding reference derivatives, so \eqref{eq:terminal} and \eqref{eq:Ur} give
\begin{equation}\label{eq:firsttracking}
 y_{[k]}^{(k)}=r_{[k]}^{(k)}.
\end{equation}
For each $\ell<k$, \eqref{eq:nonterminal} gives the actual initial derivative as $Q_{k,\ell}$ evaluated at $x_0$ and the earlier actual jets. These jets agree with the reference, and \eqref{eq:compatibility} supplies the matching initial value. Integrating \eqref{eq:firsttracking} with these matching initial derivatives gives $y_{[k]}=r_{[k]}$. For the first nonempty group, the earlier jet arguments are empty, so the same step starts the induction. The induction proves
\begin{equation*}
 h(x(t))=r(t)
\end{equation*}
throughout the maximal interval in $[0,\tau]$.

\subsection{Continuation and smooth dependence}\label{sec:liftdependence}
\begin{lemma}\label{lem:continuation}
Assume the state estimate in Assumption~\ref{ass:B}. Let $H(t,x)$ take values in $\R^m$ and be smooth for all $x\in\R^n$ and for $t$ in a neighborhood of $[0,\tau]$. Consider the maximal solution of $\dot x=f(x)+G(x)H(t,x)$ with $x(0)=x_0$. Suppose its output equals a prescribed smooth reference $r$ throughout its interval of existence in $[0,\tau]$. Then the solution reaches $\tau$ and, for every $0\le t\le\tau$,
\begin{equation}\label{eq:liftbound}
 |x(t)|\le\Omega(\tau,|x_0|,M_r),\qquad
 M_r\triangleq \max_{0\le s\le\tau}|\jet{N}{r}(s)|.
\end{equation}
For a family with uniformly bounded initial states and reference $N$-jets, a common state bound holds on $[0,\tau]$.
\end{lemma}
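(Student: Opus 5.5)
The argument is a standard blow-up/continuation argument driven by Assumption~\ref{ass:B}. Let $[0,t^*)$ with $t^*>0$ denote the intersection of the maximal forward existence interval of $\dot x=f(x)+G(x)H(t,x)$ with $[0,\tau)$, and suppose, for contradiction, that $t^*\le\tau$ and the solution does not extend beyond $t^*$. Since $H$ is smooth in $(t,x)$ on a neighborhood of $[0,\tau]\times\R^n$, the vector field is locally Lipschitz. The standard escape criterion (the same results of \cite{Teschl2012} used above) then says that $|x(t)|$ must be unbounded as $t\uparrow t^*$. The task is therefore to produce an a priori bound on $[0,t^*)$.

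To get that bound, I would view the closed-loop trajectory as a trajectory of \eqref{eq:plant} driven by the input $u(t)\triangleq H(t,x(t))$. This input is smooth on $[0,t^*)$, and by the remark in Section~\ref{sec:setting} it can be extended smoothly where needed. Assumption~\ref{ass:B} applies to every smooth input on the maximal existence interval, so it covers this one. By hypothesis $y(t)=h(x(t))=r(t)$ on $[0,t^*)$. Differentiating this identity $N$ times gives $\jet{N}{y}(t)=\jet{N}{r}(t)$ there. Because $r$ is smooth on an open interval containing $[0,\tau]$, the constant $M_r$ in \eqref{eq:liftbound} is finite, and for every $t<t^*$ we have $\|\jet{N}{y}\|_{[0,t]}\le M_r$. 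Applying Assumption~\ref{ass:B} with horizon $\tau$, $R=|x_0|$ and $M=M_r$ then gives $|x(t)|\le\Omega(\tau,|x_0|,M_r)$ for all $t\in[0,t^*)$.

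This bound contradicts finite escape at $t^*$. Hence the solution extends beyond $t^*$, and the only remaining possibility is that it exists on all of $[0,\tau]$, indeed on a neighborhood of it. The same argument applied on the closed interval gives \eqref{eq:liftbound} for every $0\le t\le\tau$; at $t=\tau$ one can also argue by continuity. For the family statement, suppose $|x_0|\le R$ and $M_r\le M$ uniformly over the family. Since the bound in Assumption~\ref{ass:B} holds whenever $|x(0)|\le R$ and the jet is at most $M$, the single constant $\Omega(\tau,R,M)$ serves every member of the family.

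The only delicate point is a subtle one. Assumption~\ref{ass:B} is stated for inputs of the open-loop plant, so I need to check that the feedback-generated input $H(t,x(t))$ qualifies and that the bound holds on a possibly non-maximal interval. The first part follows because a feedback-generated input is smooth along the solution. For the second, the closed-loop solution restricted to $[0,t^*)$ is exactly the solution of the open-loop plant driven by that input, and the assumption applies to trajectories "on their maximal existence interval". I would handle this by noting that uniqueness of solutions makes the open-loop maximal solution coincide with the closed-loop one on $[0,t^*)$, so the estimate applies for all $t<t^*$.
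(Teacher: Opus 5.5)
Your argument is correct and is essentially the paper's proof. You realize the closed-loop trajectory as a plant trajectory driven by $u(t)=H(t,x(t))$, use $\jet{N}{y}=\jet{N}{r}$ to apply Assumption~\ref{ass:B} with $R=|x_0|$ and $M=M_r$, and contradict the escape criterion of \cite[Cor. 2.15]{Teschl2012}. The paper makes your ``extended where needed'' step precise as follows. For each $t_0<\min\{T_{\max},\tau\}$ it cuts off the realized input on $[0,t_0]$ and extends it to a globally defined input, because the realized input need not extend smoothly past a putative escape time. The open-loop and closed-loop solutions are therefore identified on $[0,t_0]$, not on all of $[0,t^*)$, and the resulting bound is independent of $t_0$.
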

\begin{proof}
Let $T_{\max}$ be the maximal forward existence time. For every $t_0<\min\{T_{\max},\tau\}$, a cutoff extends the realized input $u(t)=H(t,x(t))$ from $[0,t_0]$ to a globally defined smooth open-loop input. Uniqueness identifies its plant trajectory with the lift on $[0,t_0]$. Assumption~\ref{ass:B} therefore gives \eqref{eq:liftbound} throughout the existing interval, with the same bound independent of the cutoff.

If $T_{\max}\le\tau$, this estimate confines the trajectory to a compact ball. The closed-loop vector field is smooth on a neighborhood of that ball times $[0,\tau]$, so the continuation criterion in \cite[Cor. 2.15]{Teschl2012} extends the solution through $T_{\max}$, contradicting maximality. Continuity gives \eqref{eq:liftbound} at $\tau$. Applying Assumption~\ref{ass:B} with common bounds on initial states and reference jets gives the uniform assertion.
\end{proof}

Applying Lemma~\ref{lem:continuation} with $H=U_r$ completes the existence proof in Theorem~\ref{thm:lifting}. Once the state is confined to a compact set, boundedness of the reference derivatives on $[0,\tau]$ places all arguments of $\BB^{-1}$ in a compact set, on which its smoothness ensures boundedness. No growth condition on $\BB^{-1}$ at infinity is required.

If another smooth trajectory has the same initial state and output $r$, the derivative identities \eqref{eq:terminal} recover the same input \eqref{eq:Ur}. It therefore solves the same initial value problem, whose solution is unique. This proves uniqueness of the lift.

Let $\lambda$ vary in an open subset of $\R^a$, with jointly smooth compatible data $x_0(\lambda)$ and $r(t,\lambda)$. For each fixed $\lambda_0$, the lift exists through $\tau$ by Lemma~\ref{lem:continuation}. The lifting equation is jointly smooth in $(t,x,\lambda)$. By the smooth flow theorem, with time and parameters adjoined as states, the set of initial states and parameters whose solutions exist through $\tau$ is open, and the endpoint depends smoothly on these data \cite[Th. 6.1]{Teschl2012}. Evaluating this endpoint map at $(x_0(\lambda),\lambda)$ gives smooth dependence on $\lambda$ near $\lambda_0$. Since $\lambda_0$ is arbitrary, this proves the parameter statement in Theorem~\ref{thm:lifting}.

The input formula \eqref{eq:Ur} depends on the reference through its $K$-jet. We write it as $U_r(t,x)=\mathcal U(x,j^Kr(t))$, where $\mathcal U:\R^n\times\R^{m(K+1)}\to\R^m$ is the smooth inversion map. Its second argument contains all reference derivatives from order zero through $K$. Appendix~\ref{app:quantitative} gives quantitative estimates and the variational equation for parameter sensitivities.

\section{Surjectivity and Global Geometry}\label{sec:geometry}
Subsections A--C prove Theorem~\ref{thm:section} and Corollary~\ref{cor:Liberzon}. Subsection D gives an alternative sufficient condition for a global section.
\subsection{Homogeneous assignment and uniqueness}
\begin{lemma}\label{lem:rigidity}
Under Assumptions~\ref{ass:G} and~\ref{ass:F}, for the assignment \eqref{eq:assignment}, there exist global smooth maps $T$ and $\alpha$ satisfying
\begin{equation}\label{eq:semiconj}
 DT(x)F(x)=AT(x),\qquad h(x)=CT(x).
\end{equation}
Identity~\eqref{eq:Tdef} holds. If $f(0)=0$ and $h(0)=0$, then $T(0)=0$, $\alpha(0)=0$, and $F(0)=0$. If any smooth plant trajectory satisfies all equations \eqref{eq:assignment} on an interval, the stack of its selected output derivatives in \eqref{eq:Tdef} equals $T(x)$ there and its input equals $\alpha(x)$.
\end{lemma}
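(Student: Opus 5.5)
Construct $\alpha$ and $T$ recursively through the groups, with the formal derivative variables $\theta$ replaced by the assigned state-dependent values.

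\emph{Recursive construction.} Work through the stages $k=1,\ldots,K$. For the first nonempty group, set $T_{[1]}=h_{[1]}$. The assignment prescribes $y_{[1]}^{(1)}=L_{1,0}h_{[1]}(x)$, a smooth function of $x$. In general, suppose smooth functions $\Phi_j^{(r)}(x)$ have been defined for the groups $j<k$ and all orders $r\le k$. These are the values that $y_{[j]}^{(r)}$ must take along any trajectory obeying \eqref{eq:assignment}. They are obtained from $T_{[j]}$ by iterating the companion relation: orders below $j$ are components of $T_{[j]}$, order $j$ is the linear combination $\sum_\ell L_{j,\ell}(\cdot)$, and higher orders follow by applying the companion matrix again, since the assigned linear ODE determines all derivatives from the first $j$.

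\emph{Coordinates and feedback.} Define $\Theta_\ell(x)$ as the stack of these functions with the layout of $\theta_\ell$. Set $T_{[k]}$ to have blocks $Q_{k,\ell}(x,\Theta_\ell(x))$ for $0\le\ell<k$; at $\ell=0$ this gives $h_{[k]}$. Then define $\alpha$ by solving the terminal identities \eqref{eq:terminal} with the assigned right-hand sides:
\[
 \alpha(x)\triangleq\BB\bigl(x,\Theta_{K-1}(x)\bigr)^{-1}\col_k\Bigl[\textstyle\sum_{\ell}L_{k,\ell}\bigl(T_{[k]}\bigr)_\ell(x)-a_k\bigl(x,\Theta_k^-(x)\bigr)\Bigr].
\]
This is globally smooth because $\BB$ is nonsingular for all arguments, by Lemma~\ref{lem:representation}.

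\emph{Verification of \eqref{eq:semiconj}.} The relation $h=CT$ is immediate. To get $DT\,F=AT$, consider the closed loop $\dot x=F(x)$ from an arbitrary $x_0$ on its local interval. Induct on $k$ as in the local tracking argument of Section~\ref{sec:liftproof}. Assume the groups $j<k$ satisfy their assigned linear ODEs, so their actual derivatives equal $\Phi_j^{(r)}(x(t))$. Then \eqref{eq:nonterminal} gives $y_{[k]}^{(\ell)}=(T_{[k]})_\ell(x(t))$ for $\ell<k$. By the choice of $\alpha$, \eqref{eq:terminal} gives $y_{[k]}^{(k)}=\sum_\ell L_{k,\ell}y_{[k]}^{(\ell)}$. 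Hence $\frac{d}{dt}T_{[k]}(x(t))=A_kT_{[k]}(x(t))$. Evaluating at $t=0$ for every $x_0$ gives $DT\,F=AT$ pointwise.

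\emph{Identity \eqref{eq:Tdef}.} Iterating $DT\,F=AT$ together with $h=CT$ gives $L_F^\ell h_{[k]}=C_kA_k^\ell T_{[k]}$. By the shift structure of $A_k$, this equals the $\ell$-th block of $T_{[k]}$ for $\ell<k$.

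\emph{Origin.} Suppose $f(0)=0$ and $h(0)=0$. I would argue inductively that $\Theta(0)=0$ and $T(0)=0$, which requires that $a_k(0,0)=0$ and $Q_{k,\ell}(0,0)=0$. I expect this from the construction in Appendix~\ref{app:representation}, since these functions are built from Lie derivatives of $h$ along $f$ and linear elimination combinations, all vanishing at $x=0,\theta=0$. This yields $\alpha(0)=0$ and then $F(0)=0$. This step depends on the appendix's exact formulas and is where I would be most careful.

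\emph{Trajectories obeying the assignment.} Let a smooth trajectory with some input $u$ satisfy \eqref{eq:assignment} on an interval. Repeat the induction. The lower derivatives of earlier groups equal $\Phi_j^{(r)}(x)$, so by \eqref{eq:nonterminal} the selected derivatives equal $T(x)$. Then \eqref{eq:terminal} combined with \eqref{eq:assignment} gives $\BB u$ equal to $\BB\alpha$, and nonsingularity of $\BB$ yields $u=\alpha(x)$.

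\emph{Main obstacle.} The delicate step is bookkeeping that $\Theta_\ell$ uses exactly the derivative orders that the assignment determines. This requires $r\le\ell\le k$ for groups $j<k$, and that these derivatives can be expressed in $x$ through the earlier $T_{[j]}$ only. This holds because each earlier group's full jet is generated by its companion dynamics.
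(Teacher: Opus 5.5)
Your proposal is correct and matches the paper's proof in Appendix~\ref{app:assignment} essentially step for step. Your $\Phi_j^{(r)}$ are the paper's $C_jA_j^rT_{[j]}$ stacked into $\Theta_\ell$, $T_{[k]}$ is built from $Q_{k,\ell}(x,\Theta_\ell(x))$, $\alpha$ solves the stacked terminal identities with $\BB(x,\Theta_{K-1}(x))$, and the same group-by-group induction runs along closed-loop trajectories and along arbitrary trajectories obeying \eqref{eq:assignment}. The only difference is at the origin: the paper gets $Q_{k,\ell}(0,0)=0$ and $a_k(0,0)=0$ more cheaply by evaluating the identities of Lemma~\ref{lem:representation} along the equilibrium trajectory $x\equiv0$, $u\equiv0$, whose output derivatives all vanish, while your direct check through the appendix formulas (using $f(0)=0$ and $\theta=0$ in $\widehat a_k$ and in the elimination step) also works.
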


The construction substitutes the assigned homogeneous derivatives into the identities of Lemma~\ref{lem:representation}, successively through the output groups, and then solves the selected input equations for $\alpha$. Appendix~\ref{app:assignment} gives the full proof.

Lemma~\ref{lem:rigidity} identifies the derivative stack of any lifted trajectory once its output satisfies the assigned homogeneous equations. It therefore applies on the terminal interval used to construct the section. The same argument identifies the map produced by any feedback realizing the chosen assignment.
\begin{lemma}\label{lem:assignment-unique}
Suppose Assumptions~\ref{ass:G} and~\ref{ass:F} hold, and fix the output groups and the companion pair $(A,C)$ of \eqref{eq:assignment}. Let global smooth maps $\bar\alpha:\R^n\to\R^m$ and $\bar T:\R^n\to\R^d$ satisfy
\begin{equation*}
 \bar F\triangleq f+G\bar\alpha,\qquad
 D\bar T\,\bar F=A\bar T,\qquad h=C\bar T.
\end{equation*}
Then $\bar\alpha=\alpha$ and $\bar T=T$ on $\R^n$.
\end{lemma}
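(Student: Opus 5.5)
The plan is to reduce uniqueness of $(\bar\alpha,\bar T)$ to the trajectory statement at the end of Lemma~\ref{lem:rigidity}. First I would show that $\bar T$ is forced to be the derivative stack of the output along closed-loop trajectories of $\bar F$. Fix $x_0\in\R^n$ and let $x(t)$ be the maximal solution of $\dot x=\bar F(x)$, $x(0)=x_0$, which is smooth and defined on some interval $[0,\epsilon)$. Set $z(t)\triangleq\bar T(x(t))$. The identity $D\bar T\,\bar F=A\bar T$ gives $\dot z=Az$, and $h=C\bar T$ gives $y=Cz$. Writing $z=\col_k z_{[k]}$ with blocks $z_{[k]}=\col(z_{[k],0},\ldots,z_{[k],k-1})$, the block companion structure of $A_k$ yields $\dot z_{[k],i}=z_{[k],i+1}$ for $i<k-1$ and $\dot z_{[k],k-1}=\sum_{\ell}L_{k,\ell}z_{[k],\ell}$. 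Together with $y_{[k]}=z_{[k],0}$, induction on $i$ gives $z_{[k],i}=y_{[k]}^{(i)}$ for $0\le i<k$, and then $y_{[k]}^{(k)}=\sum_\ell L_{k,\ell}y_{[k]}^{(\ell)}$. Thus the output of this plant trajectory, driven by the smooth input $u(t)=\bar\alpha(x(t))$, satisfies all equations \eqref{eq:assignment} on $[0,\epsilon)$, and $\bar T(x(t))$ equals the stack of selected output derivatives appearing in \eqref{eq:Tdef}.

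Next I would invoke the last assertion of Lemma~\ref{lem:rigidity}. It states that for any smooth plant trajectory satisfying \eqref{eq:assignment} on an interval, the derivative stack equals $T(x)$ there and the input equals $\alpha(x)$. Applying it to the trajectory above gives $\bar T(x(t))=T(x(t))$ and $\bar\alpha(x(t))=\alpha(x(t))$ for $t\in[0,\epsilon)$. Evaluating at $t=0$ yields $\bar T(x_0)=T(x_0)$ and $\bar\alpha(x_0)=\alpha(x_0)$. Since $x_0$ is arbitrary, the identities hold on all of $\R^n$. If a direct argument were preferred over citing that clause, I would substitute the assigned derivatives into \eqref{eq:terminal}--\eqref{eq:nonterminal}, group by group. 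The selected derivatives of each group then become functions of $x$ alone, obtained recursively, and $\bar\alpha(x)$ is recovered by inverting the nonsingular $\BB$ of Lemma~\ref{lem:representation}. These are exactly the formulas defining $(\alpha,T)$.

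I expect the main obstacle to be ensuring that the input recovery is pointwise and valid at $t=0$. Equation \eqref{eq:terminal} determines $u(t)$ from $x(t)$ and the output jet, but it requires the jet arguments $\theta_{k-1}$ and $\theta_k^-$, including derivatives of order up to $k$ of earlier groups. Along the trajectory these must be expressible through the assignment as functions of $x$, so I would verify that every entry of $\DD_K(y)$ is determined by the lower-order blocks via \eqref{eq:assignment} and its derivatives. Because the trajectory is smooth on a half-open interval containing $0$ and the identities hold by continuity up to $t=0$, no issue arises at the endpoint. Nonsingularity of $\BB$ for all arguments then gives a unique input, which completes the argument.
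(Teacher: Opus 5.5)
Your proposal is correct and is essentially the paper's proof. Along a local trajectory of $\bar F$ you use the companion structure (equivalently $\col_{0\le\ell<k}(C_kA_k^\ell)=I_{km_k}$) to show that the output satisfies \eqref{eq:assignment} with derivative stack $\bar T(x)$, then invoke the final clause of Lemma~\ref{lem:rigidity} and evaluate at $t=0$. Your extra care at the endpoint is harmless, and you could equally take the solution on an open interval around $0$.
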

\begin{proof}
Write $\bar T=\col_k\bar T_{[k]}$, where $\bar T_{[k]}:\R^n\to\R^{km_k}$. Along a local trajectory of $\bar F$ through any state $x_0$, the vector $\zeta\triangleq \bar T(x)$ satisfies $\dot\zeta=A\zeta$ and $y=C\zeta$. For every nonempty group, the selector matrices satisfy
\[
 \col_{0\le\ell<k}(C_kA_k^\ell)=I_{km_k}.
\]
Thus the actual output derivatives of orders zero through $k-1$ for that group equal $\bar T_{[k]}(x)$, and its derivative of order $k$ satisfies \eqref{eq:assignment}. Lemma~\ref{lem:rigidity} identifies these same derivatives with $T_{[k]}(x)$ and the input with $\alpha(x)$. Evaluating at $t=0$ gives both claimed equalities at the arbitrary state $x_0$.
\end{proof}

\subsection{Terminal references and differential rank}
Under Assumptions~\ref{ass:G} and~\ref{ass:F}, for each $\tau>0$ choose a smooth function $\chi$ that is zero near $0$ and one near $\tau$. For an initial state $x_0$ and a target $b=\col_{k=1}^K b_{[k]}\in\R^d$, with $b_{[k]}\in\R^{km_k}$, define
\begin{equation}\label{eq:rb}
 r_{x_0,b}(t)\triangleq (1-\chi(t))Ce^{At}T(x_0)+\chi(t)Ce^{A(t-\tau)}b.
\end{equation}
Write $r_b\triangleq r_{0,b}$ for the references from the origin. By \eqref{eq:semiconj}, $r_{x_0,b}$ agrees near $0$ with the output of the local nominal trajectory from $x_0$, so it is compatible with $x_0$. Near $\tau$, it satisfies \eqref{eq:assignment} with terminal derivative stack $b$.

To prove full rank at an arbitrary state $x\in\R^n$, we construct a local right inverse of $T$ whose value at $T(x)$ is $x$. Let $\varphi_t$ be the local flow of $F=f+G\alpha$, and choose $\tau>0$ so small that $\varphi_t(x)$ is defined on a neighborhood of $[-\tau,0]$. Set $x_-\triangleq \varphi_{-\tau}(x)$, so the nominal trajectory $\varphi_{t-\tau}(x)$ from $x_-$ reaches $x$ at time $\tau$. Since $T(x_-)=e^{-A\tau}T(x)$, the reference $r_{x_-,b}$ with $b=T(x)$ equals $Ce^{A(t-\tau)}T(x)$ on $[0,\tau]$, and its lift is this nominal trajectory. The lifting field is jointly smooth in $(t,x,b)$. By the same smooth flow theorem, the lifts from $x_-$ also reach $\tau$ for $b$ near $T(x)$. Denote their states at time $\tau$ by $E(b)$, which depends smoothly on $b$. Local tracking and Lemma~\ref{lem:rigidity} yield
\begin{equation*}
 T(E(b))=b,\qquad E(T(x))=x.
\end{equation*}
Differentiating at $b=T(x)$ gives
\[
 DT(x)DE(T(x))=I_d.
\]
Thus $\operatorname{rank}DT(x)=d$ at every state, and $d\le n$.

\subsection{Global sections and geometric consequences}
Assume now that Assumption~\ref{ass:B} also holds. For each $\tau>0$ and $b\in\R^d$, Theorem~\ref{thm:lifting} gives the complete lift $x_b$ of $r_b$ from the origin. Define
\begin{equation*}
 R_\tau(b)\triangleq x_b(\tau).
\end{equation*}
Near the terminal time, the reference satisfies all homogeneous equations \eqref{eq:assignment}. The lift tracks this reference, so Lemma~\ref{lem:rigidity} identifies its selected actual output derivatives with $T(x_b)$. At $t=\tau$,
\begin{align}\label{eq:endpointidentity}
 T(x_b(\tau))
 &=\col_{k,\,0\le\ell<k} r_{b,[k]}^{(\ell)}(\tau)\\
 &=\col_{k,\,0\le\ell<k} C_kA_k^\ell b_{[k]}=b.\nonumber
\end{align}
The last equality follows because $C_kA_k^\ell$ extracts the $\ell$th derivative block for $0\le\ell<k$. Equation~\eqref{eq:endpointidentity} gives the required section identity for every $b\in\R^d$.

Theorem~\ref{thm:lifting} gives smoothness of $R_\tau$. If $T(0)=0$ and $F(0)=0$, then $r_b(t)=\chi(t)Ce^{A(t-\tau)}b$, and the zero target has the zero state lift. Thus $R_\tau(0)=0$.

Differentiating the endpoint identity yields
\begin{equation*}
 DT(R_\tau(b))\,D_bR_\tau(b)=I_d.
\end{equation*}
From $T\circ R_\tau=\id_{\R^d}$, the section is an injective immersion with inverse $T$ on its image, hence a smooth embedding. The identity
\begin{equation*}
 \im R_\tau=\{x\in\R^n:R_\tau(T(x))=x\}
\end{equation*}
shows that the image is closed. For any compact set $H\subset\R^n$, the identity $R_\tau^{-1}(H)=T(H\cap\im R_\tau)$ shows that its inverse image is compact. Thus $R_\tau$ is proper.

When $n=d$, differentiating $T\circ R_\tau=\id_{\R^d}$ shows that $DR_\tau$ is invertible. The inverse function theorem therefore makes $\im R_\tau$ open. Since the image is nonempty and $\R^n$ is connected, $\im R_\tau=\R^n$, and $T,R_\tau$ are mutual smooth inverses.

Under \eqref{eq:WUD}, the section identity itself implies $T(0)=0$ and $F(0)=0$. Choose $\xi$ with $T(\xi)=0$. Its nominal trajectory has zero output, so \eqref{eq:WUD} gives $|x(t)|\le\beta(|\xi|,t)$ on the existing interval. Lemma~\ref{lem:continuation} gives completeness, and hence $x(t)\to0$. Since $T(x(t))=0$, continuity yields $T(0)=0$. Starting instead at the origin, the same zero-output estimate gives $x(t)=0$, so $F(0)=0$. The reference $r_b$ is therefore linear in $b$, and $\|\jet{N}{r_b}\|_{[0,\tau]}\le c_\tau|b|$ for a finite constant depending on $\tau,A,N$ and the chosen cutoff $\chi$. Applying \eqref{eq:WUD} from the origin proves \eqref{eq:sectionbound}.

This completes the proof of Theorem~\ref{thm:section} and, by \eqref{eq:omega}, Corollary~\ref{cor:Liberzon}.

For $d<n$, $T$ need not be injective or proper. A coordinate projection with an unconstrained internal state has a proper smooth section but is itself not proper.

Global normal forms provide another route to global tracking and feedback design \cite{Schwartz1999,Isidori1999,RAO2006}. Within the assumed global normal form, triangular dependence of the multiplier functions and trivial zero dynamics permit the explicit global diffeomorphism in \cite[Th. 1, proof of (32)]{Wang2017}. The endpoint construction here also permits nontrivial internal dynamics. When $d<n$, it supplies a section of $T$ without requiring a global parametrization of the fibers.

\begin{corollary}\label{cor:escape-certificate}
Assume only Assumptions~\ref{ass:G} and~\ref{ass:F}, and construct $T$ from a fixed homogeneous assignment. If $b\notin T(\R^n)$, then the reference $r_b$ in \eqref{eq:rb} has a locally exact lift from the origin that cannot be continued to $\tau$. Along its maximal interval every finite output jet is bounded, while the state leaves every compact subset of $\R^n$.
\end{corollary}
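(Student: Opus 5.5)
We argue by contraposition against the endpoint identity \eqref{eq:endpointidentity}, using only the parts of the construction that need Assumptions~\ref{ass:G} and~\ref{ass:F}. Fix $b\notin T(\R^n)$ and the reference $r_b=r_{0,b}$ from \eqref{eq:rb}. It is compatible with the origin, because near $t=0$ it coincides with $Ce^{At}T(0)$, the output of the local nominal trajectory of $F$ through $0$ (by \eqref{eq:semiconj}).

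Lemma~\ref{lem:representation} makes the feedback $U_{r_b}$ in \eqref{eq:Ur} smooth on a neighborhood of $[0,\tau]\times\R^n$. Hence \eqref{eq:liftODE} from $x_0=0$ has a unique maximal solution. By the local tracking argument of Section~\ref{sec:liftproof}, which uses no state bound, this solution satisfies $h(x(t))=r_b(t)$ on its whole maximal interval $[0,T_{\max})$ intersected with $[0,\tau]$. This gives the locally exact lift.

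Next we show $T_{\max}\le\tau$. Suppose instead that the solution reaches $\tau$. Then it is a lift of $(0,r_b)$ on $[0,\tau]$. Near $\tau$ the reference satisfies all homogeneous equations \eqref{eq:assignment}, so Lemma~\ref{lem:rigidity} identifies the selected output derivatives with $T(x(t))$ there. The computation \eqref{eq:endpointidentity} then gives $T(x(\tau))=b$, which contradicts $b\notin T(\R^n)$. So the lift cannot be continued to $\tau$.

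For the jet claim, $r_b$ is smooth on the open interval $I\supset[0,\tau]$. Every finite jet $\jet{j}{r_b}$ is therefore bounded on $[0,\tau]$, and since $y=r_b$ on the existence interval, every finite output jet is bounded along it.

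For the escape claim, suppose the state did not leave every compact set. If the trajectory stayed in a compact ball $\bar B$ on $[0,T_{\max})$, the closed-loop field would be smooth, hence bounded and Lipschitz, on a neighborhood of $[0,\tau]\times\bar B$. The continuation criterion \cite[Cor. 2.15]{Teschl2012} would then extend the solution beyond $T_{\max}<\infty$, contradicting maximality. This only rules out boundedness, so $\limsup|x(t)|=\infty$.

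The main obstacle is upgrading this to the stated form that the state leaves every compact set, i.e., $|x(t)|\to\infty$ as $t\to T_{\max}$. Here we invoke the standard fact for a nonautonomous smooth field on a neighborhood of $[0,\tau]\times\R^n$: if $T_{\max}<\infty$ lies in the time domain, then for every compact $\bar B$ there is $t_{\bar B}<T_{\max}$ with $x(t)\notin\bar B$ for all $t>t_{\bar B}$. Concretely, there is a uniform existence time $\delta>0$ for solutions starting in $\bar B$ at any time in $[0,\tau]$. If $x(t_n)\in\bar B$ with $t_n\to T_{\max}$, restarting from $x(t_n)$ would continue the solution past $T_{\max}$. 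This uses only local Lipschitz continuity and is the same argument as in Lemma~\ref{lem:continuation}.
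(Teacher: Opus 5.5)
Your proposal is correct and follows the paper's proof essentially step for step. You verify compatibility of $r_b$ with the origin, then use local tracking under Assumptions~\ref{ass:G} and~\ref{ass:F} only, derive the contradiction $T(x_b(\tau))=b$ from Lemma~\ref{lem:rigidity} and \eqref{eq:endpointidentity}, get bounded output jets from smoothness of $r_b$ on $I$, and obtain escape from compacts by the continuation criterion. Your uniform-existence-time restart argument just makes explicit the standard fact behind the paper's last sentence: a finite maximal time forces $|x(t)|\to\infty$, not merely $\limsup|x(t)|=\infty$.
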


This corollary identifies the obstruction behind the onto condition. Under Assumptions~\ref{ass:G} and~\ref{ass:F}, a target outside the image of $T$ forces its reference lift to escape in finite time. Assumption~\ref{ass:B} removes precisely this possibility.

\begin{proof} The local tracking part of Theorem~\ref{thm:lifting} uses only Assumptions~\ref{ass:G} and~\ref{ass:F} because \eqref{eq:Ur} is smooth and induction over the selected groups gives $h(x_b)=r_b$ for as long as the solution exists. If the lift reached $\tau$, Lemma~\ref{lem:rigidity} would identify $T(x_b(\tau))$ with the terminal derivative stack of $r_b$, which equals $b$ by the identities for the selector matrices, a contradiction. Hence its maximal time satisfies $T_{\max}\le\tau$. Since $r_b$ is smooth on the fixed interval, all finite jets of the output remain bounded. The lifting vector field is smooth on every compact state set times this bounded jet set. Therefore, the maximal forward existence time can be finite only if the state leaves every compact subset of $\R^n$. \end{proof}

\subsection{A sufficient condition for a global section}
Surjectivity can also be established by bounding only the reference lifts used to construct the section. The following criterion applies even when Assumption~\ref{ass:B} fails.
\begin{proposition}
\label{prop:terminal-family-certificate}
Suppose Assumptions~\ref{ass:G} and~\ref{ass:F} hold. Fix $\tau>0$, a homogeneous
assignment, and the references $r_b$ in \eqref{eq:rb}. Write
$\mathcal V_b(t,x)\triangleq f(x)+G(x)U_{r_b}(t,x)$.
Suppose an embedded submanifold $S\subset\R^n$ without
boundary contains $0$, and $\mathcal V_b(t,x)\in T_xS$ for all
$x\in S$, $t\in[0,\tau]$, and $b\in\R^d$, where $T_xS$ denotes the tangent space to $S$ at $x$.
Suppose that, for every $b\in\R^d$, there are a proper $C^1$ function
$V_b:S\to[0,\infty)$, meaning that each sublevel set is compact in $S$, and finite constants
$a_b,c_b\ge0$ satisfying
\begin{equation}\label{eq:terminal-family-certificate}
 DV_b(x)\mathcal V_b(t,x)\le a_bV_b(x)+c_b,\quad x\in S,\ t\in[0,\tau].
\end{equation}
Here $DV_b(x)$ denotes the differential of $V_b$ on $S$.
Then $R_\tau(b)\triangleq x_b(\tau)$, obtained by lifting $r_b$ from $0$,
is a smooth global section of $T$ and a proper smooth embedding.
If $d=n$, then $T$ is a global diffeomorphism. If $T(0)=0$ and $F(0)=0$,
then $R_\tau(0)=0$.
\end{proposition}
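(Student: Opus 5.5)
The plan is to replace the use of Assumption~\ref{ass:B} in Section~\ref{sec:geometry} by a Grönwall-type estimate on $S$ for the specific family $\mathcal V_b$. Once every lift $x_b$ is shown to reach $\tau$, the rest of the argument for Theorem~\ref{thm:section} goes through unchanged, since it used Assumption~\ref{ass:B} only through Lemma~\ref{lem:continuation}.

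\emph{Step 1: the lift stays in $S$.} Since $T(0)$ and $r_b$ are defined from the origin, $(0,r_b)$ is compatible, so the local tracking argument of Section~\ref{sec:liftproof} yields a maximal solution $x_b$ of $\dot x=\mathcal V_b(t,x)$, $x(0)=0$, with $h(x_b)=r_b$ on its interval. Because $\mathcal V_b(t,\cdot)$ is tangent to the embedded submanifold $S$ and $0\in S$, we solve the restricted time-dependent ODE on $S$ in local charts. Its solution is an integral curve of the ambient field, so by uniqueness it coincides with $x_b$ for as long as it exists in $S$. To exclude the possibility that the ambient solution survives while the $S$-solution ceases to exist, we argue through Step 2: the $S$-solution exists on $[0,\tau]$ as well, so the two curves agree on all of $[0,\tau]$.

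\emph{Step 2: Grönwall on $S$.} Along the $S$-solution, \eqref{eq:terminal-family-certificate} gives $\frac{d}{dt}V_b(x)\le a_bV_b+c_b$. Hence
\[
V_b(x(t))\le e^{a_b\tau}V_b(0)+c_b\tau e^{a_b\tau}, \qquad t\in[0,\tau],
\]
so $x(t)$ remains in a fixed sublevel set $K_b$, which is compact in $S$ by properness. We must be careful that the continuation criterion applies on the manifold $S$. The map $(t,x)\mapsto\mathcal V_b(t,x)$ restricted to $S$ is a smooth time-dependent vector field on $S$. A maximal integral curve with finite endpoint $T_{\max}\le\tau$ must leave every compact subset of $S$ (the escape lemma), which contradicts confinement to $K_b$. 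Therefore the $S$-solution, and hence $x_b$, exists on $[0,\tau]$ with $h(x_b)=r_b$. I expect this step to be the main subtle point. Compactness is needed in the topology of $S$ rather than of $\R^n$, and the escape lemma must be invoked for a time-dependent field on $S$ (adjoining $t$ as a state). This works because $S$ is embedded and has no boundary.

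\emph{Step 3: repeat Section~\ref{sec:geometry}.} With $x_b(\tau)$ defined for every $b$, the argument of Section~\ref{sec:geometry} applies. Lemma~\ref{lem:rigidity}, applied near $\tau$ where $r_b$ satisfies \eqref{eq:assignment}, together with the selector identities gives $T(R_\tau(b))=b$ as in \eqref{eq:endpointidentity}. Smoothness in $b$ follows from the smooth flow theorem, exactly as in Section~\ref{sec:liftdependence}. The set of $(b,x_0)$ whose solutions reach $\tau$ is open, and the endpoint is jointly smooth. Existence at each $b$ is all that this argument requires; no uniform bound in $b$ is needed. The embedding, closed image, properness via $R_\tau^{-1}(H)=T(H\cap\im R_\tau)$, and the diffeomorphism conclusion for $d=n$ are copied verbatim. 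Finally, if $T(0)=0$ and $F(0)=0$, then $r_0\equiv0$ and the zero trajectory is the lift, so $R_\tau(0)=0$.
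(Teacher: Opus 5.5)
Your proposal is correct and follows the paper's proof essentially step for step. Both arguments restrict the tangent lifting field to $S$, use the Gr\"onwall bound $V_b(x_b(t))\le e^{a_b\tau}(V_b(0)+c_b\tau)$ with properness of $V_b$ to continue the lift to $\tau$, get smoothness in $b$ from the smooth flow theorem at each fixed $b_0$, and then reuse Lemma~\ref{lem:rigidity} together with the embedding, properness, and $d=n$ arguments of Section~\ref{sec:geometry}.
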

\begin{proof}
Fix $b$. Tangency defines a smooth vector field on $S$. Let $x_b$ be its maximal solution from $0$, which uniqueness identifies with the ambient lift. Compatibility of $r_b$ with the zero initial state gives local tracking under Assumptions~\ref{ass:G} and~\ref{ass:F}. On the existing interval with $t\le\tau$, \eqref{eq:terminal-family-certificate} and Gronwall's inequality give
\[
 V_b(x_b(t))\le e^{a_b\tau}\bigl(V_b(0)+c_b\tau\bigr).
\]
Properness of $V_b$ confines this solution to a compact subset of $S$. Smoothness of the restricted vector field therefore permits continuation to $\tau$.

For each fixed $b_0$, the solution exists through $\tau$, and the ambient lifting field is jointly smooth in $(t,x,b)$. Smooth dependence on parameters therefore makes the endpoint smooth near $b_0$. Since $b_0$ is arbitrary, the endpoint map is smooth on $\R^d$. Lemma~\ref{lem:rigidity} gives $T(R_\tau(b))=b$. If $T(0)=0$ and $F(0)=0$, the zero target has the zero reference and zero state lift, giving $R_\tau(0)=0$. The preceding embedding and properness arguments and the argument for equal dimensions apply to this section identity.
\end{proof}

This criterion tests only the reference lifting fields on
$S$. For $\dot y=u$, $\dot z=z^2$, with output $y$, take
$S\triangleq \{z=0\}$ and $V_b\triangleq y^2$ for every $b$. Along the reference lifting field,
$\dot V_b=2y\dot r_b\le V_b+|\dot r_b|^2$, so the criterion
holds and $T(y,z)=y$ is onto. Nevertheless, $u=0$, $y(0)=0$,
and $z(0)>0$ give finite escape with zero output, violating
Assumption~\ref{ass:B}.

\section{Consequences for Feedback Design}\label{sec:consequences}
We prove Corollaries~\ref{cor:prescribed-tracking} and~\ref{cor:coordinate-maneuver}, then apply the surjectivity result to output regulation.
\subsection{Tracking and state bounds}\label{sec:tracking-proof}
Existing tracking methods include inversion through singular points for SISO systems \cite{HD1988} and static feedback based on dynamic linearizability \cite{RAO2006}. Prescribed-time tracking has also been established for uncertain MIMO systems in strict-feedback-like form \cite{YeSong2023} and, through funnel control, for unknown nonlinear systems with constraints on the transient error \cite{Lanza2024}.

Corollary~\ref{cor:prescribed-tracking} derives its global guarantee from complete lifting under Assumptions~\ref{ass:G}--\ref{ass:B}. For a known plant, the controller supplies a compatible initial reference segment and smoothly joins any prescribed smooth desired output. The resulting solution exists for all time and tracks that output exactly from the prescribed time onward. The construction accommodates internal dynamics and the original outputs without a uniform vector relative degree in Section~\ref{sec:skew}. Under weak uniform zero-state detectability, it also gives uniform bounds on the plant state, controller state, and input for bounded initial states and reference derivatives as stated in the corollary.

\begin{proof}[Proof of Corollary~\ref{cor:prescribed-tracking}]
The generator gives $\zeta(t)=e^{At}T(x_0)$. Near $0$, its output agrees
with the local nominal plant output by \eqref{eq:semiconj}. Hence $r$ is
compatible with $x_0$. Theorem~\ref{thm:lifting} gives its complete lift on
every finite interval. Uniqueness makes these lifts agree, producing one
smooth solution on $[0,\infty)$. Since $\chi=1$ near and after $\tau$,
the identity $h(x)=r$ gives \eqref{eq:prescribed-tracking}.
Differentiating $r$ using $\dot\zeta=A\zeta$ shows that its $K$-jet depends
only on $t,\zeta$ and the current $K$-jet of $y_d$.

For bounded data, set $T_R\triangleq \max_{|x|\le R}|T(x)|$. Leibniz's rule, the fixed
cutoff, and the Hurwitz property give a finite constant $c>0$, depending on
$A,C,\nu$ and $\chi$, such that
\[
 \sup_{t\ge0}|j^\nu r(t)|\le c(T_R+M).
\]
Equation~\eqref{eq:WUD} therefore bounds the state by
$\beta(R,0)+\gamma(c(T_R+M))$ for all time. The reference $K$-jet lies in
a fixed closed ball. The smooth inversion map $\mathcal U$ defined in
Section~\ref{sec:liftdependence} consequently bounds the input on the product of the
state and jet balls. Finally, $e^{At}T(x_0)$ is uniformly bounded by the
Hurwitz property. These bounds are uniform over the stated data sets.
\end{proof}

After tracking is attained, applying \eqref{eq:WUD} with initial time
$\tau$ yields
\begin{equation*}
 |x(t)|\le\beta(|x(\tau)|,t-\tau)
 +\gamma\!\left(\sup_{\tau\le s\le t}|j^N y_d(s)|\right),
 \quad t\ge\tau.
\end{equation*}
For $y_d\equiv0$, the output vanishes by $\tau$ and remains zero, while
$|x(t)|\le\beta(|x(\tau)|,t-\tau)$. Lemma~\ref{lem:rigidity} then gives
$u(t)=\alpha(x(t))$ for $t\ge\tau$, so $u(t)\to\alpha(0)$. The internal
state converges asymptotically, and exact tracking does not require it to vanish
at $\tau$. The construction uses exact initialization and reference
derivatives. Its input bound depends on the chosen time and supplies no
fixed actuator limit as $\tau$ decreases.

\subsection{Stabilization and coordinate realization}\label{sec:coordinate-proof}
We first recall the detectability argument \cite{SW1997,LMS2002,Liberzon2004} used for the convergence assertion in Corollary~\ref{cor:coordinate-maneuver}.
\begin{lemma}\label{lem:stability}
Suppose \eqref{eq:WUD} and \eqref{eq:semiconj} hold, and $A$ is Hurwitz. Then $T(0)=0$, and the origin of $\dot x=F(x)$ is globally asymptotically stable.
\end{lemma}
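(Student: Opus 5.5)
The proof has two parts: first show $T(0)=0$ and $F(0)=0$, then obtain global asymptotic stability by combining the linear decay of $T(x(t))$ with the detectability estimate \eqref{eq:WUD}.

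\textbf{Step 1: $T(0)=0$.} Fix any $x_0$ and let $x(t)$ be the maximal solution of $\dot x=F(x)$. By \eqref{eq:semiconj}, $\zeta(t)\triangleq T(x(t))$ satisfies $\dot\zeta=A\zeta$, so $\zeta(t)=e^{At}T(x_0)$ on the existence interval. Since $h=CT$, we have $y(t)=Ce^{At}T(x_0)$. Every derivative $y^{(j)}=CA^je^{At}T(x_0)$ is therefore bounded by $c\,|T(x_0)|$ uniformly in $t$, for a constant $c$ depending on $A$, $C$ and $N$.

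Along the closed loop the input is $\alpha(x(t))$. This is a state feedback rather than a smooth open-loop signal, but the cutoff argument of Lemma~\ref{lem:continuation} replaces it on each compact subinterval by a smooth open-loop input. Estimate \eqref{eq:WUD} then gives
\[
|x(t)|\le\beta(|x_0|,0)+\gamma\bigl(c|T(x_0)|\bigr)
\]
on the existence interval. Hence the solution is bounded and continues for all $t\ge0$, exactly as in Lemma~\ref{lem:continuation}.

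To obtain $T(0)=0$, take any $\xi$ with $T(\xi)=0$. If Assumption~\ref{ass:B} is available, Theorem~\ref{thm:section} supplies one. Otherwise I would rely on the argument already given in the proof of Theorem~\ref{thm:section}, which shows $T(0)=0$ under \eqref{eq:WUD}; that argument also uses a preimage of $0$, so this dependence is the point I expect to need care. The trajectory from $\xi$ has zero output, so \eqref{eq:WUD} gives $|x(t)|\le\beta(|\xi|,t)\to0$. Since $T(x(t))=0$ for all $t$, continuity yields $T(0)=0$. Starting at the origin, the same zero-output estimate forces $x(t)\equiv0$, so $F(0)=0$ and the origin is an equilibrium.

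\textbf{Step 2: attractivity.} For a general $x_0$, I would apply \eqref{eq:WUD} with the initial time shifted to $t/2$, which is justified by time invariance:
\[
|x(t)|\le\beta\bigl(|x(t/2)|,t/2\bigr)+\gamma\Bigl(c\sup_{s\ge t/2}|e^{As}T(x_0)|\Bigr).
\]
The first argument satisfies $|x(t/2)|\le\beta(|x_0|,0)+\gamma(c|T(x_0)|)$. The second term tends to zero because $A$ is Hurwitz. Together these give $x(t)\to0$.

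\textbf{Step 3: Lyapunov stability.} Given $\varepsilon>0$, I would use continuity of $T$ together with $T(0)=0$ to choose $\delta>0$ so that $|x_0|<\delta$ implies
\[
\beta(\delta,0)+\gamma\bigl(c|T(x_0)|\bigr)<\varepsilon .
\]
The bound from Step 1 then keeps the trajectory inside the $\varepsilon$-ball. Combining Steps 2 and 3 gives global asymptotic stability of the origin of $\dot x=F(x)$.

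The main obstacle is Step 1: establishing $T(0)=0$ requires a point in $T^{-1}(0)$, which Assumptions~\ref{ass:G} and~\ref{ass:F} alone do not provide. My first attempt would be the endpoint lift $R_\tau(0)$, using that \eqref{eq:WUD} implies Assumption~\ref{ass:B} with $\Omega$ as in \eqref{eq:omega}.
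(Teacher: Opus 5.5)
Your estimates are the right ones and match the paper's proof: the uniform bound $\beta(|x_0|,0)+\gamma(c|T(x_0)|)$ via the cutoff extension, completeness, the estimate on $[t/2,t]$, and Lyapunov stability from continuity of $T$ at $0$. However, Step~1 as you set it up does not close under the lemma's hypotheses. The lemma assumes only \eqref{eq:WUD}, \eqref{eq:semiconj}, and that $A$ is Hurwitz. It does not assume Assumptions~\ref{ass:G} and~\ref{ass:F}, so Theorem~\ref{thm:section} and its section $R_\tau(0)$ are not available. Nothing else in your argument produces a point of $T^{-1}(0)$.

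Under the inversion hypotheses your route would be valid and non-circular, since the section's existence does not rely on this lemma. But as written, you prove the lemma only after importing those extra hypotheses and the whole global-section construction, for a fact that needs neither.

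The missing observation is that your Step~2 never uses $T(0)=0$. For an arbitrary $x_0$, the uniform bound and the estimate on $[t/2,t]$ give $x(t)\to0$ using only $|T(x_0)|$ and the Hurwitz decay of $e^{At}T(x_0)$. So do attractivity first. Then, since $T(x(t))=e^{At}T(x_0)\to0$ while $x(t)\to0$, continuity of $T$ gives $T(0)=\lim_{t\to\infty}T(x(t))=0$ directly, with no preimage of $0$ required. Your zero-output argument at the origin then yields $F(0)=0$, and your Step~3 gives Lyapunov stability. This reordering is exactly the paper's proof and makes the lemma self-contained.
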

\begin{proof}
By \eqref{eq:semiconj} and the Hurwitz property, there are $L,a>0$ such that $|\jet{N}{y}(t)|\le Le^{-at}|T(x_0)|$. Set
\begin{equation*}
 M_0\triangleq \beta(|x_0|,0)+\gamma(L|T(x_0)|).
\end{equation*}
The input extension in Lemma~\ref{lem:continuation} permits applying \eqref{eq:WUD} on the existing interval, giving $|x(t)|\le M_0$. Smoothness of $F$ then gives completeness. Apply \eqref{eq:WUD} on $[t/2,t]$ to obtain
\begin{equation*}
 |x(t)|\le\beta(M_0,t/2)
       +\gamma\bigl(Le^{-at/2}|T(x_0)|\bigr).
\end{equation*}
The right side tends to zero uniformly on bounded sets of initial states. Since $x(t)\to0$ and $T(x(t))=e^{At}T(x_0)\to0$, continuity gives $T(0)=0$. The nominal trajectory from the origin then has zero output, and \eqref{eq:WUD} gives $x(t)=0$, so $F(0)=0$. Finally, $M_0\to0$ as $x_0\to0$, and the bound $|x(t)|\le M_0$ proves Lyapunov stability.
\end{proof}
\begin{proof}[Proof of Corollary~\ref{cor:coordinate-maneuver}]
Use $r_{x_0,b}$ from \eqref{eq:rb}, extending $\chi$ to be one after $\tau$. The reference is compatible with $x_0$. Theorem~\ref{thm:lifting} gives its lift on every finite interval. Uniqueness makes these lifts agree. Smooth parameter dependence defines $S_\tau(x_0,b)\triangleq x(\tau)$. Near and after $\tau$, the reference satisfies the assigned homogeneous equations. Lemma~\ref{lem:rigidity} and the identities for the selector matrices give $T(x(t))=e^{A(t-\tau)}b$ and $u(t)=\alpha(x(t))$ there.

Under \eqref{eq:WUD} and the Hurwitz condition, Lemma~\ref{lem:stability} gives state convergence, and continuity yields $u(t)\to\alpha(0)$. If $f(0)=0$, then $h(0)=CT(0)=0$, and Lemma~\ref{lem:rigidity} gives $\alpha(0)=0$.
\end{proof}
For example, $\dot x=1+u$, $y=x$ satisfies \eqref{eq:WUD} with $N=0$. Assigning $\dot y=-y$ gives $\alpha(x)=-1-x$, so the state converges to zero while the input converges to $-1$.

\subsection{Application to multivariable output regulation}\label{sec:2026}
Under the global inversion and detectability hypotheses stated below, the surjectivity requirement in the output regulation result of \cite[Th. 6.1]{Isidori2026} follows from complete lifting. This application requires identifying the corrected coordinates used in regulation designs \cite{Wu2020,Wu2021,IsidoriWu2023} with terminal reference derivatives. We establish this identification for the three-output construction in \cite[Sec. 5]{Isidori2026}, along an exosystem trajectory contained in a compact invariant set.

Write the source's state as $x=\col(w,\widetilde\eta,\widetilde z,X)$ and its plant error state as $q\triangleq \col(\widetilde z,X)$. Here $w$ is the exosystem state, $\widetilde\eta$ is the internal model error, and $\widetilde z$ is the plant internal error coordinate. Let $\bar u$ denote its auxiliary control and $\Gamma(w,\widetilde\eta)$ the shifted output map of its internal model. The effective input in \cite[eq. (19)]{Isidori2026} is $v\triangleq \bar u+\Gamma(w,\widetilde\eta)$. With this input, the error equations become
\begin{align}
 \dot q&=f_e(w,q)+G_e(w,q)v,\qquad e=X_1,\label{eq:error-effective}\\
 \dot{\widetilde\eta}&=M_\eta\widetilde\eta+N_\eta v.\notag
\end{align}
Here $X=\col(X_1,X_2,X_3)$ consists of blocks of dimensions $3,2,1$, and $e=\col(e_1,e_2,e_3)$. The matrices $M_\eta,N_\eta$ are the constant matrices of the source's internal model. The nonlinear coefficients depend on $(w,q)$, independently of $\widetilde\eta$, on the global domain of the error states, with the smoothness required by the displayed derivatives. The zero-error state is invariant for $v=0$. Fix a trajectory of $\dot w=s(w)$ in the prescribed compact invariant set $W$ and a terminal time $\tau>0$.

Let $K_2=\col(K_2^{(2)},K_2^{(3)})\in\R^2$ and $K_3\in\R$ be the source's correction functions. The superscripts $(2)$ and $(3)$ identify the scalar corrections for outputs $e_2$ and $e_3$, not derivative orders. They satisfy
\begin{equation}\label{eq:correctednonterminal}
 \col(e_2',e_3')=X_2-K_2,\qquad e_3''=X_3-K_3.
\end{equation}
The functions depend on the state and formal output derivatives. Let $b_1,b_2,b_3$ denote the coefficient rows of $v$ in the selected equations for $e_1',e_2'',e_3'''$, and put $B\triangleq \col(b_1,b_2,b_3)$. Appendix~\ref{app:regulation-proof} gives the correction functions and the inverse input formula.

Fix the source's homogeneous assignment and denote its corrected map \cite[eq. (29)]{Isidori2026} by $\mathcal T:W\times\R^{n_e}\to\R^6$, where $n_e$ is the dimension of $\col(\widetilde\eta,q)$. Its three blocks $\zeta_1,\zeta_2,\zeta_3$ have dimensions $1,2,3$. The detectability condition in \cite[Sec. 5.3]{Isidori2026} bounds $q$ through $e$. Namely, for some $\beta\in\KL$, $\gamma\in\mathcal K$, and integer $N\ge0$, every initial state $q(0)$ and every smooth effective input give
\begin{equation}\label{eq:2026qbound}
 |q(t)|\le\beta(|q(0)|,t)+\gamma(\|\jet{N}{e}\|_{[0,t]})
\end{equation}
on the corresponding existence interval. The bound is uniform over these inputs.

\begin{corollary}\label{cor:Isidori}
Assume the global domain of the error states, global nonsingularity of $B$, and conditions on the corrected output derivatives of \cite[Sec. 5.1]{Isidori2026}. If \eqref{eq:error-effective} satisfies \eqref{eq:2026qbound}, then there is a smooth map $\mathcal R_\tau:\R^6\to\R^{n_e}$ such that $\mathcal T(w(\tau),\mathcal R_\tau(b))=b$ for every $b\in\R^6$. Thus the corrected map is onto even with the terminal exosystem state $w(\tau)$ fixed, and the separately imposed surjectivity assumption in \cite[Th. 6.1]{Isidori2026} follows from these hypotheses.
\end{corollary}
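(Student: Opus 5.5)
We follow the endpoint construction of Theorem~\ref{thm:section}, adapted to the time-varying error system \eqref{eq:error-effective} driven along the fixed exosystem trajectory $w(t)$. Let $X$ denote the augmented error state $\col(\widetilde\eta,q)$, with outputs $e=X_1$ and input $v$. The first step is to cast the source's construction in the form of Lemma~\ref{lem:representation}. We treat $w(t)$ as a known smooth time-varying coefficient; equivalently, we append $\dot w=s(w)$ as an uncontrolled state that stays in the compact set $W$. The selected equations for $e_1',e_2'',e_3'''$, together with \eqref{eq:correctednonterminal}, then give terminal identities of the form \eqref{eq:terminal}. Their coefficient matrix is $B$, which is globally nonsingular by hypothesis. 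The correction functions $K_2,K_3$ play the role of the nonterminal maps $Q_{k,\ell}$ in \eqref{eq:nonterminal}. The groups are $e_1$ at stage $1$, $e_2$ at stage $2$ and $e_3$ at stage $3$, so $d=1+2+3=6$, matching the block dimensions of $\mathcal T$. The explicit formulas are in Appendix~\ref{app:regulation-proof}. From them we define the inversion feedback $U_r(t,q)$ exactly as in \eqref{eq:Ur}, and we obtain local tracking by the same induction over groups as in Section~\ref{sec:liftproof}. The $\widetilde\eta$ equation is linear, driven by $v$, and does not feed back into $q$.

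The second step is to identify $\mathcal T$ with terminal reference derivatives, in the spirit of Lemma~\ref{lem:rigidity}. We show that if an error trajectory has output satisfying the source's homogeneous assignment near time $t$, then $\mathcal T(w(t),X(t))$ equals the stack of $e_1$, $(e_2,e_2')$ and $(e_3,e_3',e_3'')$ at $t$. This follows by evaluating the corrected coordinates in \cite[eq.~(29)]{Isidori2026} through \eqref{eq:correctednonterminal}, with the formal derivative arguments set to the actual derivatives. I expect this identification to be the main obstacle. The corrected map is built from substituted assigned derivatives and may involve $\widetilde\eta$ and $w$ through $\Gamma$. One must check carefully that, once the assignment holds, those substitutions coincide with the actual derivatives, so that no spurious dependence on $\widetilde\eta$ survives. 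I would first attempt this block by block: $\zeta_1=e_1$ trivially; $\zeta_2$ via the $K_2$ correction; $\zeta_3$ via both corrections, using that the coefficients are independent of $\widetilde\eta$.

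The third step is continuation and smooth dependence. Take $q(0)=0$ and $\widetilde\eta(0)=0$. Use the reference $r_b(t)=\chi(t)Ce^{A(t-\tau)}b$, which is compatible with the zero state because the zero-error state is invariant for $v=0$. Its $N$-jet on $[0,\tau]$ is bounded by $c_\tau|b|$. Along the lift, \eqref{eq:2026qbound} gives $|q(t)|\le\beta(0,t)+\gamma(c_\tau|b|)$; the input-extension argument of Lemma~\ref{lem:continuation} justifies applying the estimate on the existing interval. With $q$ bounded and $w$ in the compact set $W$, the input $v=U_{r_b}$ is bounded. Then $\widetilde\eta$, which obeys a linear equation, cannot escape. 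So the lift reaches $\tau$. The smooth flow theorem then makes $\mathcal R_\tau(b)\triangleq X_b(\tau)$ smooth in $b$. Finally, the identification of step two, combined with \eqref{eq:endpointidentity}, gives $\mathcal T(w(\tau),\mathcal R_\tau(b))=b$, and surjectivity with $w(\tau)$ fixed follows.
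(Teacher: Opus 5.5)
Your proposal is correct and follows essentially the same route as the paper's proof in Appendix~\ref{app:regulation-proof}. Both arguments proceed by local tracking from the zero error state through the selected corrected equations and \eqref{eq:correctednonterminal}, then continuation via \eqref{eq:2026qbound} with the input-extension argument, boundedness of $v_r$ on the compact set together with variation of constants for $\widetilde\eta$, and finally the block-by-block identification \eqref{eq:2026Tidentification} near $\tau$ combined with smooth dependence on $b$.

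The differences are only in ordering and notation. Your redefinition of $X$ as $\col(\widetilde\eta,q)$ clashes with the paper's $X=\col(X_1,X_2,X_3)$. You also justify compatibility by invariance of the zero-error state under $v=0$, whereas the paper reads it directly from \eqref{eq:correctednonterminal}; both justifications are valid.
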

Along a lifted reference satisfying the source's homogeneous assignment near $\tau$, the correction identities give
\begin{equation}\label{eq:2026Tidentification}
 \zeta_1=e_1,\qquad \zeta_2=\col(e_2,e_2'),\qquad
 \zeta_3=\col(e_3,e_3',e_3'').
\end{equation}
Choosing the six terminal reference derivatives arbitrarily realizes every target of $\mathcal T$. Estimate~\eqref{eq:2026qbound} ensures that each lift reaches $\tau$, and smooth parameter dependence supplies $\mathcal R_\tau$. Appendix~\ref{app:regulation-proof} proves these assertions, including the bounds for $v$ and $\widetilde\eta$.

Under their global inversion and detectability hypotheses, the same conclusion holds for the three-output map in \cite[Sec. 3.2]{IsidoriNOLCOS2025} and the map in \cite[Th. 1]{IsidoriCDC2025} with any finite number $\ell$ of scalar output groups of orders $1,\ldots,\ell$. Appendix~\ref{app:2025} verifies the coordinate ordering and the general recursion. The section in Corollary~\ref{cor:Isidori} is smooth in the error coordinates $(\widetilde\eta,q)$ of the system driven by $v$. The function $\Gamma$ does not enter these error dynamics. The original output-feedback designs retain their requirements on observers, gains, and local exponential stability.

\section{Examples and Numerical Verification}\label{sec:examples}
\subsection{Verifying the state bound}\label{sec:state-verification}
The following Lyapunov condition implies Assumption~\ref{ass:B} by bounding finite-horizon growth, as in classical completeness criteria \cite{AS1999}.

\begin{proposition}\label{prop:Vcertificate}
Suppose there are a finite integer $N\ge0$, a $C^1$ function $V:\R^n\to[0,\infty)$, $\alpha_1,\alpha_2,\sigma\in\Kinf$, and constants $a,c_0\ge0$ such that
\begin{equation}\label{eq:Vproper}
 \alpha_1(|x|)\le V(x)\le\alpha_2(|x|),\qquad x\in\R^n,
\end{equation}
and, along every trajectory of \eqref{eq:plant} driven by a smooth input, throughout its interval of existence,
\begin{equation}\label{eq:Vjet}
 \frac{d}{dt}V(x(t))\le aV(x(t))+c_0+\sigma(|\jet{N}{y}(t)|).
\end{equation}
Then Assumption~\ref{ass:B} holds with
\begin{equation}\label{eq:Vbound}
 \Omega(\tau,R,M)\triangleq \alpha_1^{-1}\!\bigl(e^{a\tau}[\alpha_2(R)+\tau(c_0+\sigma(M))]\bigr).
\end{equation}
\end{proposition}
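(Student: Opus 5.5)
The plan is a Gronwall comparison on $v(t)\triangleq V(x(t))$, followed by inversion of the lower comparison function in \eqref{eq:Vproper}. Fix $\tau>0$, $R,M\ge0$, and a trajectory of \eqref{eq:plant} driven by a smooth input. Suppose $|x(0)|\le R$, and let $t$ lie in the maximal existence interval with $0\le t\le\tau$ and $\|\jet{N}{y}\|_{[0,t]}\le M$.

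For every $s\in[0,t]$ we have $|\jet{N}{y}(s)|\le M$. Since $\sigma$ is increasing, \eqref{eq:Vjet} gives, on $[0,t]$,
\[
 \dot v(s)\le a v(s)+c_0+\sigma(M).
\]
Because $V$ is $C^1$ and $x$ is smooth, $v$ is $C^1$. Multiplying by $e^{-as}$ and integrating over $[0,s]$ yields
\[
 v(s)\le e^{as}v(0)+(c_0+\sigma(M))\int_0^s e^{a(s-\sigma')}\,d\sigma'.
\]
The integral is at most $se^{as}\le \tau e^{a\tau}$, since $a\ge0$ and $s\le\tau$. Using $v(0)\le\alpha_2(|x(0)|)\le\alpha_2(R)$, which holds by monotonicity of $\alpha_2$, we obtain
\[
 V(x(t))\le e^{a\tau}\bigl[\alpha_2(R)+\tau(c_0+\sigma(M))\bigr].
\]

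Next apply the lower bound in \eqref{eq:Vproper}. It gives $\alpha_1(|x(t)|)\le V(x(t))$. Since $\alpha_1\in\Kinf$ is a strictly increasing bijection of $[0,\infty)$, its inverse is defined on all of $[0,\infty)$ and is increasing. Therefore $|x(t)|\le\Omega(\tau,R,M)$ with $\Omega$ as in \eqref{eq:Vbound}.

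This bound is finite and depends only on $(\tau,R,M)$. It is independent of the input and of the particular trajectory, which is exactly the uniformity required in Assumption~\ref{ass:B}.

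The argument has no real obstacle. The only points needing care are two. First, the hypothesis on the jet is stated through the supremum over $[0,t]$, so the bound must be applied on the interval $[0,t]$ rather than on all of $[0,\tau]$. Second, the estimate $se^{as}\le\tau e^{a\tau}$ must be valid uniformly in $s$, which holds because $a\ge0$. The case $a=0$ is covered by the same formula.
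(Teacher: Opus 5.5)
Your proof is correct and follows the same route as the paper. You bound $\dot V$ on $[0,t]$ by $aV+c_0+\sigma(M)$ using monotonicity of $\sigma$, apply Gronwall with $a\ge0$ to obtain the bracketed quantity in \eqref{eq:Vbound}, and invert $\alpha_1$; the only difference is that you write out the integrating-factor computation that the paper compresses into a single appeal to Gronwall's inequality.
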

\begin{proof}
Along a maximal solution whose output $N$-jet is bounded by $M$ up to time $t$, \eqref{eq:Vjet} gives $\dot V\le aV+c_0+\sigma(M)$. For $t\le\tau$ and $|x(0)|\le R$, Gronwall's inequality bounds $V(x(t))$ by the argument of $\alpha_1^{-1}$ in \eqref{eq:Vbound}. The lower bound in \eqref{eq:Vproper} gives the claimed state estimate.
\end{proof}

The finite-horizon bound permits zero-output trajectories to grow, whereas weak uniform zero-state detectability requires convergence to zero. The coefficient $a$ may be positive because the certificate controls finite-horizon growth rather than asymptotic decay. To illustrate the distinction from detectability, in $\dot z=z$, $\dot y=u$, the proper function $V\triangleq z^2+y^2$ satisfies
\[
 \dot V=2z^2+2yu\le2V+u^2
          \le2V+|\col(y,\dot y)|^2.
\]
Hence Assumption~\ref{ass:B} holds even though the zero-output trajectory with $z(0)\ne0$ grows. The constant term also covers $\dot z=1$, $\dot y=u$, for which the same $V$ satisfies $\dot V\le V+1+|\col(y,\dot y)|^2$.

\begin{remark}
Classical estimates based on unboundedness observability may depend on both the physical input and the output \cite{AS1999}. Assumption~\ref{ass:B} bounds the state uniformly over physical inputs from a finite output jet, so the state is confined before any bound on the reconstructed input is invoked.
\end{remark}

As a further example, consider $\dot z=1$, $\dot v=u$, with output $y=v+1$. The system has no equilibrium, and Assumptions~\ref{ass:G} and~\ref{ass:F} hold with $K=1$ since $\dot y=u$. If $|(z(0),v(0))|\le R$ and $\|y\|_{[0,t]}\le M$, then
\[
 |(z(t),v(t))|\le R+\tau+M+1,\qquad 0\le t\le\tau,
\]
so Assumption~\ref{ass:B} holds with $N=0$. For the assignment $\dot y=-y$, the map and feedback are $T(z,v)=v+1$ and $\alpha(z,v)=-(v+1)$. Lifting \eqref{eq:rb} from the origin gives
\[
 R_\tau(b)=(\tau,b-1),\qquad T(R_\tau(b))=b.
\]
Thus the global section exists without an equilibrium, while $R_\tau(0)\ne0$.

\subsection{Nontriangular coupling with internal feedback}\label{sec:skew}
Let $J\in\R^{q\times q}$ be nonzero and skew-symmetric, with $q\ge2$, and fix $e,c\in\R^q$ with $|e|=1$. Take $\epsilon,\kappa\ge0$ and $a\in\R$, and put
\[
 \Phi(z)\triangleq z+\epsilon\tanh z,\qquad
 g(p,z,\eta)\triangleq -a\eta+c^Tz+p^2,
\]
where $\tanh$ acts componentwise. For states $p,\eta\in\R$ and $y,z\in\R^q$, consider
\begin{equation}\label{eq:skewplant}
\begin{split}
 \dot p&=u,\qquad \dot z=v,\qquad \dot\eta=g(p,z,\eta),\\
 \dot y&=\Phi(z)+\kappa\tanh(\eta)e+uJz,
\end{split}
\end{equation}
with outputs $(p,y)$ and inputs $(u,v)$. When $\kappa>0$ and $c\ne0$, the internal state changes the output drift and receives feedback from $z$.

Define $B_{\mathrm{inv}}(z,\xi)\triangleq D\Phi(z)+\xi J$ for $\xi\in\R$. The first stage exposes $u=\dot p$. The second gives
\begin{equation*}
 v=B_{\mathrm{inv}}(z,\dot p)^{-1}
 \bigl[\ddot y-\ddot pJz-\kappa\operatorname{sech}^2(\eta)e\,g\bigr].
\end{equation*}
For every state, every real formal $\xi$, and every $w\in\R^q$,
\begin{equation*}
 w^TB_{\mathrm{inv}}(z,\xi)w=w^TD\Phi(z)w\ge |w|^2.
\end{equation*}
Thus $B_{\mathrm{inv}}$ is nonsingular and $\|B_{\mathrm{inv}}^{-1}\|\le1$. The fixed cumulative ranks are $1$ and $1+q$. Since $L_Gy$ contains $Jz$, these original outputs do not have a uniform vector relative degree on the full state space.

With $x=\col(p,y,z,\eta)$, the identities $z^TJz=0$, $z^T\Phi(z)\ge|z|^2$, and $|\tanh\eta|\le1$ give $|z|\le|\dot y|+\kappa$. If $|x(0)|\le R$ and $\|j^1(p,y)\|_{[0,t]}\le M$ with $t\le\tau$, the upper right Dini derivative $D^+|\eta|(t)\triangleq \limsup_{\delta\downarrow0}(|\eta(t+\delta)|-|\eta(t)|)/\delta$ satisfies $D^+|\eta|\le|a||\eta|+|c|(M+\kappa)+M^2$. Integrating and using $|x|\le|p|+|y|+|z|+|\eta|$ gives
\begin{equation*}
 |x(t)|\le 3M+\kappa+e^{|a|\tau}\bigl[R+\tau\bigl(|c|(M+\kappa)+M^2\bigr)\bigr].
\end{equation*}
Thus Assumption~\ref{ass:B} holds for every $a\in\R$, and Theorems~\ref{thm:lifting} and~\ref{thm:section} apply throughout this parameter range.

If $a>\kappa|c|$, the weak uniform zero-state detectability estimate also holds. Set $M_t\triangleq \|j^1(p,y)\|_{[0,t]}$ and $\lambda\triangleq a-\kappa|c|>0$. The identities $z^TJz=0$ and $z_i\tanh z_i\ge0$ give $|z|\le|\dot y|+\kappa|\eta|$. It follows that $D^+|\eta|\le-\lambda|\eta|+|c||\dot y|+p^2$. Therefore, on the existing trajectory interval,
\begin{equation*}
 |\eta(t)|\le e^{-\lambda t}|\eta(0)|
       +\frac{|c|M_t+M_t^2}{\lambda}.
\end{equation*}
Consequently,
\[
 |x(t)|\le(1+\kappa)e^{-\lambda t}|x(0)|
        +3M_t+\frac{1+\kappa}{\lambda}(|c|M_t+M_t^2),
\]
which proves \eqref{eq:WUD} of order one, uniformly over inputs.

For $\dot p=-p$ and $\ddot y=-y-2\dot y$, put
$w\triangleq \Phi(z)+\kappa\tanh(\eta)e-pJz$ and $S\triangleq D\Phi(z)-pJ$. The map and feedback are
\begin{align*}
 T(p,y,z,\eta)&=\col(p,y,w),\\
 v_\star&=S^{-1}\bigl[-y-2w-pJz
             -\kappa\operatorname{sech}^2(\eta)e\,g\bigr],
\end{align*}
together with $u_\star=-p$. Direct differentiation gives $\dot w=-y-2w$. For a target $b=\col(b_p,b_y,b_w)$ and fixed $\eta$, set $p=b_p$ and $y=b_y$. Solving for $z$ entails the strongly monotone equation
\begin{equation}\label{eq:skewsection}
 \Phi(z)-b_pJz=b_w-\kappa\tanh(\eta)e.
\end{equation}
The lifting construction selects a state satisfying $T(x)=b$ as the endpoint of a trajectory of the inverse dynamics. For $\epsilon=\kappa=|c|=0$ and $a=1$, \eqref{eq:skewsection} gives the explicit solution $z=(I_q-b_pJ)^{-1}b_w$.

To relate this example to the correction convention in \eqref{eq:correctednonterminal}, write
\[
 \dot y=\Phi(z)+\kappa\tanh(\eta)e-K_2(x,\xi),
 \qquad K_2(x,\xi)\triangleq -\xi Jz,
\]
where $\xi$ denotes the formal value of $\dot p$. The derivative of $K_2$ in a remaining input direction $\delta v\in\R^q$ is $-\xi J\delta v$, which is not identically zero. Thus this representation fails the annihilation condition on the correction functions stated in Appendix~\ref{app:regulation-proof}. The tests in Section~\ref{sec:numerics} include the explicit special case above and the coupled family with $\epsilon,\kappa,|c|>0$.

\subsection{Why stable zero dynamics do not suffice}\label{sec:counterexample}
The next example isolates the role of Assumption~\ref{ass:B}. It satisfies the global conditions on the modified inversion algorithm and has globally exponentially stable zero dynamics, yet the map $T$ is not onto. Thus minimum-phase behavior \cite{ByrnesIsidori1988} cannot replace a state bound uniform over inputs. Finite escape with bounded output also connects to unboundedness observability \cite{AS1999}.

For states $(z,p,q,s)\in\R^4$ and inputs $(u_1,u_2)\in\R^2$, let $\theta(z)\triangleq \arctan z$ and $\delta(z,s)\triangleq \theta(z)-s$. Consider
\begin{align}
 \dot z&=(1+z^2)(u_2-\theta(z)),\label{eq:counterplant}\\
 \dot p&=u_1,\qquad
 \dot q=s-\delta(z,s)u_1,\qquad \dot s=u_2,\nonumber
\end{align}
with outputs $(p,q)$. All coefficients are globally smooth. Along every trajectory,
\begin{equation*}
 \dot\delta=-\theta(z),
\end{equation*}
so the selected input equations are
\begin{equation*}
 \dot p=u_1,\qquad
 \ddot q=u_2+\theta(z)\dot p-\delta(z,s)\ddot p.
\end{equation*}
Their cumulative input matrix is the identity. Thus the plant satisfies Assumptions~\ref{ass:G} and~\ref{ass:F} with the output equations selected at stages $1$ and $2$, for every value of the formal derivative variables.

Assign $\dot p=-p$ and $\ddot q=-q-\dot q$. The feedback and its associated map are
\begin{align*}
 u_1&=-p,\qquad u_2=-q-s+p\theta(z),\\
 T(z,p,q,s)&=\bigl(p,q,(1-p)s+p\theta(z)\bigr).
\end{align*}
When the target's first coordinate is $1$, its third coordinate must lie in $(-\pi/2,\pi/2)$. The map is therefore not onto. In fact, it is still a submersion because the derivatives of its third coordinate with respect to $(z,s)$ are $(p/(1+z^2),1-p)$ and cannot both vanish. Thus the failure of surjectivity persists even when $T$ has full differential rank everywhere.

The zero-output constraints give $p=q=s=0$ and $u_1=u_2=0$. The zero dynamics are
\begin{equation*}
 \dot z=-(1+z^2)\arctan z,
\end{equation*}
with solution and bound
\begin{equation*}
 z(t)=\tan\bigl(e^{-t}\arctan z(0)\bigr),\qquad
 |z(t)|\le e^{-t}|z(0)|.
\end{equation*}
The inequality follows from convexity of $\tan$ on $[0,\pi/2)$ and odd symmetry. Hence the zero dynamics are globally exponentially stable.

The state bound in Assumption~\ref{ass:B} fails. From $(z,p,q,s)=(0,1,0,0)$, apply the smooth inputs $u_1=0$, $u_2=t+1$. For $t<\pi/2$ the solution is
\begin{equation*}
 z(t)=\tan t,\quad p(t)=1,\quad
 s(t)=t+\tfrac12t^2,\quad
 q(t)=\tfrac12t^2+\tfrac16t^3.
\end{equation*}
Every output jet of fixed finite order stays bounded on this finite interval, while the internal state $z$ escapes. Neither Assumption~\ref{ass:B} nor \eqref{eq:WUD} holds. This proves that global regular inversion plus even global exponential stability of the zero dynamics is insufficient for surjectivity.

The arctangent output map in \cite[Example 4]{Liberzon2004} illustrates failure of surjectivity and incompleteness of the inverse vector field. The role of \eqref{eq:counterplant} here is the conjunction of a globally nonsingular input coefficient matrix, exponentially stable nontrivial zero dynamics, and failure of surjectivity of the associated output-derivative map.

Corollary~\ref{cor:escape-certificate} shows that an unattained target forces finite escape of its local lift while all finite output jets remain bounded. The converse fails. For example, $\dot y=u$, $\dot z=z^2$ has the onto map $T=y$, but $z(0)>0$ causes escape even for zero output.

\subsection{Numerical verification}\label{sec:numerics}
Numerical integration of the lifting feedback tests coordinate realization and prescribed-time tracking. Table~\ref{tab:numeric} reports results for the coupled family in Section~\ref{sec:skew}, its special case $\epsilon=\kappa=|c|=0$, $a=1$, and the three-stage model in Appendix~\ref{app:regression}. The coupled cases use $q=2$, $\epsilon=0.6$, $\kappa=0.7$, $a=2$, $c=(0.4,0.3)^T$, $e=(1,0)^T$, and $J=\left(\begin{smallmatrix}0&-1\\1&0\end{smallmatrix}\right)$. They test Corollary~\ref{cor:coordinate-maneuver} from two nonzero initial states, with four targets and horizons $\tau=0.4,1$.

\begin{table}[t]
\caption{Maximum errors in output lifting}
\label{tab:numeric}
\centering
\begin{tabular}{lccc}
\toprule
Plant & Runs & Endpoint error & Tracking error\\
\midrule
Special case of \eqref{eq:skewplant} & 12 & $1.48\times10^{-10}$ & $3.13\times10^{-11}$ \\
Model \eqref{eq:regressionplant} & 4 & $3.59\times10^{-10}$ & $1.26\times10^{-11}$ \\
Coupled system \eqref{eq:skewplant} & 16 & $5.32\times10^{-11}$ & $1.37\times10^{-11}$ \\
\bottomrule
\end{tabular}
\par\smallskip
\parbox{\columnwidth}{\footnotesize The parameter choices for the special case and the coupled system are specified in Section~\ref{sec:numerics}. Errors are maxima over the listed runs in the infinity norm. The maximum tracking error is also taken over sampled times in $[0,\tau]$.}
\end{table}

The endpoint errors in Table~\ref{tab:numeric} compare $T(x(\tau))$ with the prescribed target, and the tracking errors compare the realized output with the reference. For the coupled family, the lifted input agrees with the nominal feedback on the terminal interval to numerical accuracy. Continuation under that feedback follows the assigned coordinate evolution.

We also test Corollary~\ref{cor:prescribed-tracking} on the coupled family with the same parameters. Eight cases use sinusoidal desired outputs with four initial states and $\tau=1,2$. Two additional cases use a zero reference with $\tau=1$ and a homogeneous reference with $\tau=1.5$. The physical plant and controller generator are integrated jointly to $t=12$. The largest sampled Euclidean norm of the output error $h(x(t))-y_d(t)$ across these ten cases on $[\tau,12]$ is $1.80\times10^{-8}$ with relative and absolute tolerances $10^{-10}$ and $10^{-12}$. Tightening the tolerances reduces the maximum sampled value of $|h(x(t))-r(t)|$ on $[0,12]$ in every case.

\section{Conclusion}
Under the stated global inversion conditions, a finite-horizon state bound uniform over inputs makes the output-derivative map of every fixed homogeneous assignment surjective and gives it a smooth global section. The construction realizes each derivative target as the endpoint of a complete reference lift, including when internal dynamics are present. For the fixed output groups and assignment, the separate onto requirement in Liberzon's framework therefore follows from the inversion conditions and weak uniform zero-state detectability. For known plants, complete lifting also yields an initialized smooth controller for exact output tracking after any prescribed positive time, as well as coordinate realization followed by nominal feedback. Under weak uniform zero-state detectability, the tracking controller provides uniform state and input bounds for bounded initial states and reference derivatives of the required order. The counterexample shows that even global exponential stability of the zero dynamics, together with global regular inversion, does not ensure surjectivity.

\appendices
\section{Supporting Inversion Identities}\label{app:representation}
\subsection{Proof of Lemma~\ref{lem:representation}}
\begin{proof}
We retain every previously selected input equation. For an original output component still unselected after stage $k-1$, its derivative of order $k-1$ is a smooth function of $(x,\theta_{k-1})$. Stack all such functions into $\overline Q_{k-1}$, with $\overline Q_0\triangleq h$. The induction constructs these stacks together with the selected equations.

\emph{Differentiation.} Write the formal derivative blocks as $\theta_{j,r}\triangleq y_{[j]}^{(r)}\in\R^{m_j}$. Define
\begin{align*}
 \widehat a_k(x,\theta_k^-)
 &\triangleq D_x\overline Q_{k-1}\,f(x)
   +\sum_{j=1}^{k-1}\sum_{r=j}^{k-1}
     D_{\theta_{j,r}}\overline Q_{k-1}\,\theta_{j,r+1},
       \\
 \widehat B_k(x,\theta_{k-1})
 &\triangleq D_x\overline Q_{k-1}\,G(x).
\end{align*}
The summands for empty groups are absent. Along every plant trajectory, the unselected derivatives of order $k$ therefore equal $\widehat a_k(x,\theta_k^-)+\widehat B_k(x,\theta_{k-1})u$.
Differentiation shifts each formal variable to the next derivative of the same selected output. These derivatives remain formal arguments, so the candidate equation contains no explicit derivative of the input. This accounts for the different argument sets of $a_k$ and $B_k$ in \eqref{eq:terminal}.

\emph{Selection.} Assumption~\ref{ass:G} selects a fixed $m_k$-row block of these candidate equations, defining $a_k,B_k$ and the newly selected output group $y_{[k]}$. Earlier selected input equations remain at their original derivative orders. Using the same formal derivative vector $\theta_{k-1}$ in every block, put
\begin{equation*}
 \BB^{[k]}\triangleq \col_{j=1}^{k}B_j(x,\theta_{j-1}).
\end{equation*}
This matrix belongs to $\R^{\rho_k\times m}$ and has $\rho_k$ independent rows. The row block $\widetilde B_k\in\R^{(m-\rho_k)\times m}$ belonging to the outputs still unselected after stage $k$ lies in its row span. If $\rho_k>0$, define $P_k\in\R^{(m-\rho_k)\times\rho_k}$ by
\begin{equation*}
 P_k\triangleq \widetilde B_k(\BB^{[k]})^\top
       \bigl(\BB^{[k]}(\BB^{[k]})^\top\bigr)^{-1}.
\end{equation*}
Thus $\widetilde B_k=P_k\BB^{[k]}$. The Gram matrix is positive definite at every state and every value of the formal derivative variables, so $P_k$ is globally smooth. This formula defines the elimination coefficients without choosing a column minor.

\emph{Elimination.} Let $\widetilde a_k$ denote the unselected part of $\widehat a_k$. Substituting the retained input equations into $\widetilde a_k+P_k\BB^{[k]}u$ gives
\begin{equation}\label{eq:Qrecursion}
\begin{aligned}
 \overline Q_k(x,\theta_k)\triangleq {}&\widetilde a_k(x,\theta_k^-)
       +P_k(x,\theta_{k-1})\\
 &{}\times\col_{j=1}^{k}\bigl[\theta_{j,j}-a_j(x,\theta_j^-)\bigr].
\end{aligned}
\end{equation}
All arguments in this expression are restrictions of the same $\theta_k$. When $\rho_k=0$, the candidate input matrix is zero, and \eqref{eq:Qrecursion} means simply $\overline Q_k=\widehat a_k$. When $m_k=0$ but $\rho_k>0$, the same elimination uses the retained stack and differentiation proceeds without selecting a new group.

For an output selected at stage $k$, its derivative of each order $\ell<k$ is the corresponding component of $\overline Q_\ell$, defining $Q_{k,\ell}$. Equations \eqref{eq:terminal} and \eqref{eq:nonterminal} follow by induction. At stage $K$, $\BB^{[K]}=\BB$ has $m$ independent rows and $m$ columns, proving its global nonsingularity.

Factorization through the newly selected rows alone, as printed after (18) in \cite{Liberzon2004}, is valid when the residual rows belong to their span. Here the factorization uses all retained rows, as in \cite[Sec. 4]{IsidoriJSSC2025}. For the same fixed row selections, the two recursions coincide if the residual rows lie in the span of the newly selected rows at every stage and every formal argument. Indeed, independence of the retained rows then forces the coefficients of all earlier rows to vanish. The elimination formulas agree, and induction gives the same subsequent equations and assigned map.

The construction uses differentiation, fixed row permutations, and inverses of globally nonsingular matrices. All identities are defined for all values of the formal derivative variables before any realizability claim. Theorem~\ref{thm:lifting} establishes realizability for compatible references.
\end{proof}

\subsection{Proof of Lemma~\ref{lem:rigidity}}\label{app:assignment}
\begin{proof}
For each $\ell$, the formal derivative variables will be assigned the values
\begin{equation}\label{eq:homogeneous-sheet}
 \Theta_\ell(x)\triangleq \col_{1\le j\le r\le\ell}
                 C_jA_j^rT_{[j]}(x).
\end{equation}
Thus $\Theta_\ell(x)$ assigns values depending on the state to the formal variables $\theta_\ell$, in the same order as $\DD_\ell(r)$ extracts them from a reference. The corresponding $\Theta_k^-(x)$ omits group $k$. Define the state blocks successively by
\begin{equation}\label{eq:Trecursive}
 T_{[k]}(x)\triangleq \col_{0\le\ell<k}
              Q_{k,\ell}\bigl(x,\Theta_\ell(x)\bigr).
\end{equation}
For $\ell<k$, the vector $\Theta_\ell$ uses only the previously defined groups $j\le\ell<k$, so the recursion is well defined. If earlier groups are empty, this vector is empty. The first $m_k$ entries of $T_{[k]}$ form $h_{[k]}$.

Once all blocks are defined, set
\begin{equation}\label{eq:alpha-recursive}
\begin{aligned}
 \alpha(x)\triangleq {}&\BB\bigl(x,\Theta_{K-1}(x)\bigr)^{-1}\\
 &{}\times\col_k\left[C_kA_k^kT_{[k]}(x)-a_k(x,\Theta_k^-(x))\right].
\end{aligned}
\end{equation}
Every row uses the corresponding components of the same vector in \eqref{eq:homogeneous-sheet}. Lemma~\ref{lem:representation} therefore ensures that \eqref{eq:alpha-recursive} defines a globally smooth feedback $\alpha$. 

Consider the local trajectory of $F=f+G\alpha$ starting from an arbitrary state. For the first nonempty group, \eqref{eq:nonterminal} identifies its output derivatives below the selection order with \eqref{eq:Trecursive}. The corresponding row in \eqref{eq:alpha-recursive} imposes \eqref{eq:assignment}. Hence all of that group's higher derivatives have the form
\begin{equation}\label{eq:homogeneousjets}
 y_{[j]}^{(r)}=C_jA_j^rT_{[j]}(x),\qquad r\ge0.
\end{equation}
Suppose this has been established for all earlier groups. Substituting \eqref{eq:homogeneousjets} into the next group's identities \eqref{eq:nonterminal} gives its output derivatives below the selection order as \eqref{eq:Trecursive}. The row for the next group then imposes its homogeneous equation. Induction proves the claim for every group, including multidimensional groups. Differentiating the resulting derivative stacks proves \eqref{eq:semiconj} and \eqref{eq:Tdef}. Since the initial state was arbitrary, these are pointwise identities on all of $\R^n$.

If $f(0)=0$ and $h(0)=0$, the zero-input equilibrium at $x=0$ makes every actual output derivative zero. Thus $Q_{k,\ell}(0,0)=0$ and $a_k(0,0)=0$. Induction in \eqref{eq:Trecursive} gives $T(0)=0$, and the unique solution of \eqref{eq:alpha-recursive} is $\alpha(0)=0$.

Finally take any plant trajectory driven by a smooth input and satisfying \eqref{eq:assignment} on an interval, without assuming its input equals $\alpha$. The first group's output derivatives below the selection order are its state expressions from \eqref{eq:nonterminal}, and its higher ones obey \eqref{eq:homogeneousjets}. They identify the next block by the same recursion \eqref{eq:Trecursive}. Repeating this argument identifies all blocks and all formal arguments of the stacked input coefficient matrix. The selected input equations now coincide with \eqref{eq:alpha-recursive}, whose solution is unique. Hence the trajectory input is $\alpha(x)$ on that interval.
\end{proof}

\section{Quantitative Properties of Output Lifts}\label{app:quantitative}
Using the smooth inversion map $\mathcal U$ defined in Section~\ref{sec:liftdependence}, define the lifting vector field for every formal $K$-jet $\mathbf j\in\R^{m(K+1)}$ by
\begin{equation*}
 \mathcal V(x,\mathbf j)\triangleq f(x)+G(x)\mathcal U(x,\mathbf j).
\end{equation*}
These globally smooth maps give quantitative bounds for compatible reference data on bounded sets.
\begin{proposition}\label{prop:lift-stability}
Set $\nu\triangleq \max\{K,N\}$ to include the derivative orders used by inversion and the state bound. Fix a finite horizon $\tau>0$ and finite bounds $R,M\ge0$. Under Assumptions~\ref{ass:G}--\ref{ass:B}, consider two compatible data pairs $(x_0,r)$ and $(\widetilde x_0,\widetilde r)$ satisfying
\begin{equation*}
 |x_0|,|\widetilde x_0|\le R,\qquad
 \|\jet{\nu}{r}\|_{[0,\tau]},
 \|\jet{\nu}{\widetilde r}\|_{[0,\tau]}\le M.
\end{equation*}
Let $(x,u)$ and $(\widetilde x,\widetilde u)$ denote their respective lifts. There are finite nonnegative constants $L_x,L_j,L_u$, depending only on these bounds and the plant, such that, for $0\le t\le\tau$,
\begin{align}
 |x(t)-\widetilde x(t)|
 &\le e^{L_xt}|x_0-\widetilde x_0|
       +L_jt e^{L_xt}\varepsilon_r,\label{eq:lipschitz-lifts}\\
 |u(t)-\widetilde u(t)|
 &\le L_u\bigl(|x(t)-\widetilde x(t)|+\varepsilon_r\bigr),
       \label{eq:lipschitz-inputs}
\end{align}
where $\varepsilon_r\triangleq \|\jet{K}{(r-\widetilde r)}\|_{[0,\tau]}$.
\end{proposition}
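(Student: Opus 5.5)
The plan is to confine both lifts to a common compact set and then run a standard Gronwall comparison on the lifting vector field $\mathcal V$. First, by Lemma~\ref{lem:continuation}, both lifts exist on $[0,\tau]$. Since $\nu\ge N$, we have $\|\jet{N}{r}\|_{[0,\tau]}\le M$, and the same holds for $\widetilde r$. Hence both trajectories satisfy
\[
 |x(t)|,\ |\widetilde x(t)|\le \Omega_0\triangleq\Omega(\tau,R,M),\qquad 0\le t\le\tau .
\]
Since $\nu\ge K$, the reference $K$-jets $\jet{K}{r}(t)$ and $\jet{K}{\widetilde r}(t)$ both lie in the closed ball $\mathcal J_M\subset\R^{m(K+1)}$ of radius $M$. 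Set $\mathcal C\triangleq\{|x|\le\Omega_0\}\times\mathcal J_M$. This is compact and convex, and it depends only on $\tau,R,M$ and the plant.

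Next, recall from Section~\ref{sec:liftdependence} that the lift satisfies $\dot x=\mathcal V(x,\jet{K}{r}(t))$ and $u(t)=\mathcal U(x(t),\jet{K}{r}(t))$. Both $\mathcal V$ and $\mathcal U$ are globally smooth, so on the convex compact set $\mathcal C$ they are Lipschitz in each argument. Define
\[
 L_x\triangleq\max_{\mathcal C}\|D_x\mathcal V\|,\qquad L_j\triangleq\max_{\mathcal C}\|D_{\mathbf j}\mathcal V\|,\qquad L_u\triangleq\max\bigl\{\max_{\mathcal C}\|D_x\mathcal U\|,\ \max_{\mathcal C}\|D_{\mathbf j}\mathcal U\|\bigr\}.
\]
The mean value inequality along segments inside $\mathcal C$ gives
\[
 |\mathcal V(x,\mathbf j)-\mathcal V(\widetilde x,\widetilde{\mathbf j})|\le L_x|x-\widetilde x|+L_j|\mathbf j-\widetilde{\mathbf j}|,
\]
and the analogous inequality holds for $\mathcal U$ with constant $L_u$. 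The input bound \eqref{eq:lipschitz-inputs} then follows at once, because $|\jet{K}{r}(t)-\jet{K}{\widetilde r}(t)|\le\varepsilon_r$.

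For the state bound \eqref{eq:lipschitz-lifts}, let $e(t)\triangleq|x(t)-\widetilde x(t)|$. Writing the difference of the two integral equations gives
\[
 e(t)\le e(0)+L_j t\,\varepsilon_r+L_x\int_0^t e(s)\,ds .
\]
The forcing term $e(0)+L_jt\varepsilon_r$ is nondecreasing in $t$, so Gronwall's inequality in its nondecreasing-forcing form yields
\[
 e(t)\le\bigl(e(0)+L_jt\varepsilon_r\bigr)e^{L_xt},
\]
which is exactly \eqref{eq:lipschitz-lifts}.

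The only real obstacle is the first step: we need a single compact set that contains both trajectories for all $t\in[0,\tau]$, with no growth assumption on $\BB^{-1}$ at infinity. Assumption~\ref{ass:B}, via Lemma~\ref{lem:continuation}, provides this, and the choice $\nu=\max\{K,N\}$ is what lets one jet bound $M$ control both the state bound and the domain of the Lipschitz constants. Compatibility of each data pair is used only to invoke Theorem~\ref{thm:lifting}, which guarantees that the lifts exist and are given by the feedback $U_r$. Convexity of $\mathcal C$ is what allows the mean value estimate to be applied.
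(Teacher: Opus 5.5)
Your proposal is correct and follows the paper's proof. Both arguments confine the two lifts to the ball from Lemma~\ref{lem:continuation}, bound the first derivatives of $\mathcal V$ and $\mathcal U$ on a compact product of state and jet balls, and close with the mean-value estimate and Gronwall; the paper merely enlarges the balls slightly (jet radius $M+1$), which your use of convexity and the globally defined formal-jet domain makes unnecessary.
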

\begin{proof}
Both states lie in the ball supplied by Lemma~\ref{lem:continuation}. The maps $\mathcal U$ and $\mathcal V$ are smooth on the full state and formal jet domain. Their first derivatives have finite maxima on the product of a slightly larger closed ball in the state space and the closed ball of radius $M+1$ in the space of formal $K$-jets. The mean-value estimate gives
\[
 |\mathcal V(x,\mathbf j)-\mathcal V(\widetilde x,\widetilde{\mathbf j})|
 \le L_x|x-\widetilde x|+L_j|\mathbf j-\widetilde{\mathbf j}|.
\]
Integrating this inequality and applying Gronwall yields \eqref{eq:lipschitz-lifts}. Applying the analogous derivative bound for $\mathcal U$ yields \eqref{eq:lipschitz-inputs}. The maps are globally defined for all formal derivative variables, so the derivative bounds apply even along line segments that do not correspond to compatible data.
\end{proof}

Let $\lambda$ vary in an open subset of $\R^a$, and let $(x_0(\lambda),r_\lambda)$ be a jointly smooth family of compatible data with state lifts $x_\lambda$. Set $Z(t)\triangleq D_{\lambda}x_\lambda(t)$ and $\mathbf j_\lambda(t)\triangleq j^Kr_\lambda(t)$. Write $\mathcal V_x\triangleq D_x\mathcal V$ and $\mathcal V_{\mathbf j}\triangleq D_{\mathbf j}\mathcal V$. Differentiating the lifting equation gives
\begin{equation}\label{eq:variational-lift}
 \dot Z=\mathcal V_x(x_\lambda,\mathbf j_\lambda)Z
          +\mathcal V_{\mathbf j}(x_\lambda,\mathbf j_\lambda)D_{\lambda}\mathbf j_\lambda,
 \qquad Z(0)=D_{\lambda}x_0(\lambda).
\end{equation}
All coefficients are bounded on compact parameter neighborhoods. This formula supplies the sensitivity equation for the section constructed in Section~\ref{sec:geometry}.

For each compact target set, Proposition~\ref{prop:lift-stability} gives a finite local Lipschitz constant for $R_\tau$, because the reference in \eqref{eq:rb} is affine in $b$. For this family, take $\lambda=b$ and $x_0(\lambda)=0$. The derivative of the section is obtained by evaluating \eqref{eq:variational-lift} at $t=\tau$ with $Z(0)=0$. The sensitivity bound is local in $b$ and depends on the cutoff and horizon.

\section{Verification for the Feedback Applications}\label{app:regulation}
\subsection{Output regulation}\label{app:regulation-proof}
We use the state and correction functions introduced in Section~\ref{sec:2026}.
For $x=\col(w,\widetilde\eta,q)$, the drift and input matrix of the augmented error system are
\[
\begin{aligned}
 f_{\mathrm a}(x)&\triangleq \col\bigl(s(w),M_\eta\widetilde\eta,f_e(w,q)\bigr),\\
 G_{\mathrm a}(x)&\triangleq \col\bigl(0,N_\eta,G_e(w,q)\bigr).
\end{aligned}
\]
For a smooth function $K$ of $x$ and finitely many formal variables $\xi_{j,r}$ representing $e_j^{(r)}$, define
\[
 \mathscr D_0K\triangleq D_xK\,f_{\mathrm a}(x)
       +\sum_{j,r}D_{\xi_{j,r}}K\,\xi_{j,r+1}.
\]
The sum runs over the formal arguments of $K$, with derivatives taken while holding the other arguments fixed. This is the operator denoted $L_F$ in \cite{Isidori2026}. The input matrix denoted $g$ there is $G_{\mathrm a}$ here. With the source's coefficient functions $F_{11}\in\R^2$ and $F_{21},F_{22}\in\R$, the correction identities are
\begin{align}
 K_2&=F_{11}e_1',\label{eq:correctedK2}\\
 K_3&=F_{21}e_1'+F_{22}(e_2''+\mathscr D_0K_2^{(2)})
                     +\mathscr D_0K_2^{(3)}.\label{eq:correctedK3}
\end{align}
The source's conditions $L_gK_2=0$ and $L_gK_3=0$ therefore read $D_xK_2\,G_{\mathrm a}=0$ and $D_xK_3\,G_{\mathrm a}=0$, with the formal variables held fixed. Writing $a_i$ for the source's drift terms and using the input rows $b_i$ defined in Section~\ref{sec:2026}, the selected input equations become
\begin{align}
 e_1'&=a_1+b_1v,\label{eq:corrected1}\\
 e_2''&=a_2+b_2v-\mathscr D_0K_2^{(2)},\nonumber\\
 e_3'''&=a_3+b_3v-\mathscr D_0K_3.\nonumber
\end{align}
To lift $e=r$, define
\begin{equation}\label{eq:2026lift}
 v_r\triangleq B^{-1}\left\{
 \begin{pmatrix}
 r_1'\\
 r_2''+\mathscr D_0K_2^{(2)}(x,j^3r(t))\\
 r_3'''+\mathscr D_0K_3(x,j^3r(t))
 \end{pmatrix}-\begin{pmatrix}a_1\\a_2\\a_3\end{pmatrix}
 \right\}.
\end{equation}
Here $j^3r(t)$ collects the reference values and derivatives through order three, substituted into every correction term throughout the lifting interval. The homogeneous assignment enters when the terminal reference jets are identified with the corrected coordinates.

\begin{proof}[Proof of Corollary~\ref{cor:Isidori}]
Take a reference zero near the initial time and start at $q=0$, $\widetilde\eta=0$, with fixed $w(0)\in W$. Equation~\eqref{eq:corrected1} and \eqref{eq:2026lift} first give $e_1'=r_1'$. Substituting the resulting equality of all required derivatives of the first output group gives $e_2''=r_2''$, and then $e_3'''=r_3'''$. Equation~\eqref{eq:correctednonterminal} supplies the matching initial derivatives. Thus $e=r$ on the maximal local interval.

Since $e=r$, estimate~\eqref{eq:2026qbound} bounds $q$ in terms of the reference derivatives. The smooth extension of each realized input segment in Lemma~\ref{lem:continuation} justifies using this estimate before global existence has been established. It bounds $q$ on a fixed finite horizon. Since $w(t)\in W$ and $B^{-1}$ and all coefficients in the effective input equations are bounded on the resulting compact set, \eqref{eq:2026lift} bounds $v_r$ as well. Finally,
\begin{equation*}
 \widetilde\eta(t)=e^{M_\eta t}\widetilde\eta(0)
          +\int_0^t e^{M_\eta(t-s)}N_\eta v_r(s)\,ds
\end{equation*}
is finite on that horizon. Continuation therefore completes the lift.

Let $H_i$ be the companion matrices selected in the source, $H\triangleq \diag(H_1,H_2,H_3)$, and let $C$ select the first entry of each of the three blocks. Prescribe $r_b\triangleq \chi Ce^{H(t-\tau)}b$ for $b\in\R^6$. Near $\tau$, the first output satisfies $e_1'=H_1e_1$. Therefore the corrected second coordinate in the source's $\zeta_2$ is exactly $e_2'$, by \eqref{eq:correctednonterminal}. Its reference satisfies the second assigned equation, identifying the formal $e_2''$ that enters $K_3$. Repeating the substitution in $K_2^{(3)}$ and $K_3$ gives \eqref{eq:2026Tidentification}. These identities recover the source's recursively defined coordinates. Their values at $\tau$ are the arbitrary blocks of $b$. If $(\widetilde\eta_b,q_b)$ is the error part of the lift, define $\mathcal R_\tau(b)\triangleq \col(\widetilde\eta_b(\tau),q_b(\tau))$. Smooth dependence on $b$ supplies the smooth section, with $\mathcal T(w(\tau),\mathcal R_\tau(b))=b$. The zero target gives the zero-error endpoint at this same $w(\tau)$.
\end{proof}

\subsection{State feedback maps}\label{app:2025}
We verify the two applications stated in Section~\ref{sec:2026} in their original plant coordinates, with dynamics $\dot x=f(x)+g(x)u$ and output $y$. In this subsection, $\mathscr D_0$ uses the same formula with $f_{\mathrm a}=f$ and input matrix $g$. Both sources assume a zero equilibrium, global nonsingularity of their stacked input coefficient matrix, and weak uniform zero-state detectability. Consequently, once a smooth reference has been locally lifted from the origin, Lemma~\ref{lem:continuation} extends its lift to $\tau$.

For the three-output construction in \cite{IsidoriNOLCOS2025}, write $F_{11}=\col(F_{11}^{(2)},F_{11}^{(3)})$. The correction functions are \eqref{eq:correctedK2}--\eqref{eq:correctedK3}, with $e=y$, superscripts $(2),(3)$ denoting the two components, and $\mathscr D_0$ computed along the plant drift. The coefficient conditions in \cite[eq. (10)]{IsidoriNOLCOS2025} imply $L_gK_2=0$ and $L_gK_3=0$ for all formal derivatives, for the plant input matrix $g$. Indeed, expanding $K_3$ gives coefficients
\[
 F_{21}+F_{22}L_fF_{11}^{(2)}+L_fF_{11}^{(3)},\quad
 F_{22}F_{11}^{(2)}+F_{11}^{(3)},\quad F_{22}
\]
of $y_1',y_1'',y_2''$, respectively. Their Lie derivatives along the input vector fields vanish because
$L_gF_{11}=0$, $L_gF_{22}=0$, $L_gL_fF_{11}^{(2)}=0$, and
$L_gL_fF_{11}^{(3)}+L_gF_{21}=0$.
Thus \eqref{eq:2026lift}, with $u$ in place of $v$, supplies the local lift.
Near $\tau$, impose the source's three homogeneous output equations. Successive substitution into its definitions of the coordinates $\xi_{ij}$ in Section~3.1 gives
$\xi_{11}=y_1$, $\xi_{21}=y_2$, $\xi_{31}=y_3$,
$\xi_{22}=y_2'$, $\xi_{32}=y_3'$, and $\xi_{33}=y_3''$.
The map in its equation (16), denoted $T_{\mathrm N}$ here, therefore satisfies
\[
 T_{\mathrm N}(x(\tau))
 =P\col(y_1,y_2,y_2',y_3,y_3',y_3'')(\tau),
\]
where $P(z_1,z_2,z_3,z_4,z_5,z_6)^\top
\triangleq (z_1,z_2,z_4,z_3,z_5,z_6)^\top$ for $z\in\R^6$.
For a target $c\in\R^6$, choose the grouped terminal derivatives as $P^{-1}c$. Complete lifting then realizes $T_{\mathrm N}(x(\tau))=c$ and gives a smooth section by dependence on $c$.

For \cite{IsidoriCDC2025}, use its $\ell$ scalar output groups of orders $1,\ldots,\ell$, and set $d\triangleq \sum_{i=1}^{\ell}i$. Write $X_i=\col(X_{i,1},X_{i,\mathrm{rem}})$, where $X_{i,1}$ is scalar and $X_{i,\mathrm{rem}}$ contains the remaining $\ell-i$ components. Split the correction $K_i=\col(K_{i,1},K_{i,\mathrm{rem}})$ in the same way, starting with $K_1\triangleq 0$. The subscript $1$ denotes the first component and $\mathrm{rem}$ the remaining components. Let $a_i\triangleq L_fX_{i,1}$, $b_i\triangleq L_gX_{i,1}$, and $F_{ij}\in\R^{\ell-i}$, $1\le j\le i<\ell$, be the source's coefficients. Its defining identities give, for $i<\ell$,
\[
 L_gX_{i,\mathrm{rem}}+\sum_{j=1}^iF_{ij}b_j=0,\qquad
 L_fX_{i,\mathrm{rem}}+\sum_{j=1}^iF_{ij}a_j=X_{i+1}.
\]
With $\mathscr D_0$ again denoting drift differentiation plus formal derivative shifting, differentiation of the remaining output equations yields
\begin{equation}\label{eq:CDCcorrection}
 K_{i+1}=\sum_{j=1}^iF_{ij}
 \bigl(y_j^{(j)}+\mathscr D_0K_{j,1}\bigr)+\mathscr D_0K_{i,\mathrm{rem}}.
\end{equation}
In fact, putting $y_{i:\ell}\triangleq \col(y_i,\ldots,y_\ell)$, induction from $X_1=y$ gives
\begin{equation}\label{eq:CDCidentities}
 y_{i:\ell}^{(i-1)}=X_i-K_i,\qquad
 y_i^{(i)}=a_i+b_i u-\mathscr D_0K_{i,1}.
\end{equation}
The induction uses the source's condition $L_gK_i=0$ for all formal arguments \cite[eq. (7)]{IsidoriCDC2025}. Differentiating the tail of the first identity and substituting $a_j+b_j u=y_j^{(j)}+\mathscr D_0K_{j,1}$ gives \eqref{eq:CDCcorrection} and the first identity in \eqref{eq:CDCidentities} at stage $i+1$. Differentiating its first component gives the next selected input equation.

Set $B\triangleq \col_{i=1}^{\ell}b_i$. Substituting a reference $r$ into the formal arguments defines the smooth inverse input
\[
 u_r(t,x)\triangleq B(x)^{-1}\col_{i=1}^{\ell}
 \bigl[r_i^{(i)}(t)-a_i(x)
       +\mathscr D_0K_{i,1}(x;j^\ell r(t))\bigr].
\]
Each $K_i$ uses only derivatives of earlier output groups. The same tracking induction therefore gives $y=r$ from the zero initial state when $r$ is zero near $0$, and continuation completes the lift.

Let $H_i$ be the companion matrix selected for output $i$ in the source and $C_i$ the selector of its first coordinate. For an arbitrary target $b=\col_{i=1}^{\ell}b_{[i]}\in\R^d$, with $b_{[i]}\in\R^i$, choose
$r_i(t)\triangleq \chi(t)C_i e^{H_i(t-\tau)}b_{[i]}$.
The source constructs its coordinates layer by layer by substituting the derivatives prescribed by earlier homogeneous groups into $X_i-K_i$. Near $\tau$, these substitutions equal the actual reference derivatives. Equation~\eqref{eq:CDCidentities} consequently identifies layer $i$ with $y_{i:\ell}^{(i-1)}$, starting with $X_1=y$. Thus the map $T_{\mathrm C}$ in \cite[eq. (24)]{IsidoriCDC2025}, ordered by output group, satisfies
\[
 T_{\mathrm C}(x(\tau))
 =\col_{i=1}^{\ell}(y_i,y_i',\ldots,y_i^{(i-1)})(\tau)
 =b.
\]
This proves surjectivity for every finite $\ell$ when each output group is scalar. Smooth dependence of the lift on the terminal data gives the corresponding section.

\section{An Explicit Three-Stage Verification Model}\label{app:regression}
For scalar state coordinates $p,q,z,a,b,c,\eta$ and inputs $u_1,u_2,u_3$, the three-stage model used in the verification code is
\begin{align}
 \dot p&=u_1,\quad \dot q=z+q^2u_1,\quad \dot z=u_2,\label{eq:regressionplant}\\
 \dot a&=b+zu_1,\quad
 \dot b=c+bu_1+qu_2,\quad \dot c=u_3,\nonumber\\
 \dot\eta&=-\eta+p^2,\qquad y=\col(p,q,a).\nonumber
\end{align}
Its output equations are selected at stages $1$, $2$, and $3$, respectively. Write the reference as $r=\col(r_p,r_q,r_a)$, matching the three output components.

For \eqref{eq:regressionplant}, define
\begin{align*}
 d_q&\triangleq z+q^2u_1,\\
 \psi&\triangleq r_q''-2q d_q u_1-q^2\dot u_1.
\end{align*}
Choose $u_1=r_p'$ and $u_2=\psi$. Then $\dot q=d_q$ and $\ddot q=r_q''$ identically. Along this feedback,
\begin{equation*}
 \dot\psi=r_q'''-2d_q^2u_1-2qr_q''u_1-4q d_q\dot u_1-q^2\ddot u_1.
\end{equation*}
The remaining output satisfies
\begin{equation*}
 \ddot a=c+\phi,\qquad
 \phi\triangleq bu_1+(q+u_1)\psi+z\dot u_1.
\end{equation*}
Using $\dot b=c+bu_1+q\psi$, the derivative of $\phi$ is
\begin{equation}\label{eq:regphidot}
 \dot\phi=(c+bu_1+q\psi)u_1+b\dot u_1+(d_q+2\dot u_1)\psi
                     +(q+u_1)\dot\psi+z\ddot u_1.
\end{equation}
Hence $u_3=r_a'''-\dot\phi$ gives $a'''=r_a'''$. The coefficient of each newly determined input is one after substituting the derivatives of the previously selected outputs. This verifies Assumptions~\ref{ass:G} and~\ref{ass:F} for the modified inversion algorithm in the stated order.

The lower order derivative equations give explicit reconstruction:
\begin{align*}
 z&=q'-q^2p',\\
 u_2&=q''-2qq'p'-q^2p'',\\
 b&=a'-zp',\\
 c&=a''-bp'-(q+p')u_2-zp''.
\end{align*}
Thus $|(p,q,z,a,b,c)|$ is bounded by a polynomial in $|\jet{2}{y}|$ with zero constant term. The internal equation gives $|\eta(t)|\le e^{-t}|\eta(0)|+\|p\|_{[0,t]}^2$. Combining these bounds proves \eqref{eq:WUD} for this example.

For $\dot p=-p$, $\ddot q=-q-2\dot q$, and $a'''=-a-3a'-3a''$, put
\begin{align*}
 d_{q,*}&\triangleq z-pq^2,\\
 \psi_*&\triangleq -q-2d_{q,*}+2pq d_{q,*}-pq^2,\\
 \phi_*&\triangleq -pb+(q-p)\psi_*+pz.
\end{align*}
The output-derivative map used in the computation is
\begin{equation*}
 T=\col\bigl(p,q,d_{q,*},a,b-pz,c+\phi_*\bigr).
\end{equation*}
In \eqref{eq:regphidot}, substituting $u_1=-p$, $\dot u_1=p$, $\ddot u_1=-p$, $r_q''=-q-2d_{q,*}$, and $r_q'''=-d_{q,*}-2r_q''$ gives the third nominal feedback by assigning $r_a'''=-a-3(b-pz)-3(c+\phi_*)$. Direct differentiation verifies $DTF=AT$. The verification script integrates this model from the origin for four prescribed targets using the reference family constructed with the cutoff function, including the transition before the terminal homogeneous interval.

\bibliographystyle{IEEEtran}
\bibliography{references}

\end{document}